\documentclass[11pt,english]{amsart}
\usepackage[T1]{fontenc}
\usepackage[latin1]{inputenc}
\synctex=-1
\usepackage{babel}
\usepackage{enumerate}
\usepackage{verbatim}
\usepackage{booktabs}
\usepackage{amstext}
\usepackage{amsthm}
\usepackage{amssymb}

\usepackage{fullpage}

\makeatletter

\providecommand{\tabularnewline}{\\}

\numberwithin{equation}{section}
\numberwithin{figure}{section}
\theoremstyle{plain}
\newtheorem{thm}{\protect\theoremname}
\theoremstyle{definition}
\newtheorem{defn}[thm]{\protect\definitionname}
\theoremstyle{remark}
\newtheorem{rem}[thm]{\protect\remarkname}
\theoremstyle{plain}
\newtheorem{prop}[thm]{\protect\propositionname}
\theoremstyle{plain}
\newtheorem{lem}[thm]{\protect\lemmaname}
\theoremstyle{plain}
\newtheorem{cor}[thm]{\protect\corollaryname}

\usepackage{tikz}
\usepackage{pgfplots}

\usepackage{babel}

\usepackage[unicode=true,
bookmarks=false,
breaklinks=false,pdfborder={0 0 1},pagebackref,colorlinks=false]
{hyperref}

\providecommand{\corollaryname}{Corollary}
\providecommand{\definitionname}{Definition}
\providecommand{\propositionname}{Proposition}
\providecommand{\remarkname}{Remark}
\providecommand{\theoremname}{Theorem}

\makeatother

\providecommand{\corollaryname}{Corollary}
\providecommand{\definitionname}{Definition}
\providecommand{\lemmaname}{Lemma}
\providecommand{\propositionname}{Proposition}
\providecommand{\remarkname}{Remark}
\providecommand{\theoremname}{Theorem}

\begin{document}

\subjclass[2020]{14N20, 14J27, 14J25, 14G35}

\addtolength{\textwidth}{0mm} \addtolength{\hoffset}{-0mm} 

\global\long\def\AA{\mathbb{A}}%
\global\long\def\CC{\mathbb{C}}%
\global\long\def\BB{\mathbb{B}}%
\global\long\def\PP{\mathbb{P}}%
\global\long\def\QQ{\mathbb{Q}}%
\global\long\def\RR{\mathbb{R}}%
\global\long\def\FF{\mathbb{F}}%
\global\long\def\kk{\mathbb{K}}%

\global\long\def\DD{\mathbb{D}}%
\global\long\def\NN{\mathbb{N}}%
\global\long\def\ZZ{\mathbb{Z}}%
\global\long\def\HH{\mathbb{H}}%
\global\long\def\Gal{{\rm Gal}}%
\global\long\def\GG{\mathbb{G}}%
\global\long\def\UU{\mathbb{U}}%

\global\long\def\bA{\mathbf{A}}%
\global\long\def\kP{\mathfrak{P}}%
\global\long\def\kQ{\mathfrak{q}}%
\global\long\def\ka{\mathfrak{a}}%
\global\long\def\kP{\mathfrak{p}}%
\global\long\def\kn{\mathfrak{n}}%
\global\long\def\km{\mathfrak{m}}%
\global\long\def\cA{\mathfrak{\mathcal{A}}}%
\global\long\def\cB{\mathfrak{\mathcal{B}}}%
\global\long\def\cC{\mathfrak{\mathcal{C}}}%
\global\long\def\cD{\mathcal{D}}%
\global\long\def\cH{\mathcal{H}}%
\global\long\def\cK{\mathcal{K}}%

\global\long\def\cF{\mathcal{F}}%
\global\long\def\cI{\mathfrak{\mathcal{I}}}%
\global\long\def\cJ{\mathcal{J}}%

\global\long\def\cL{\mathcal{L}}%
\global\long\def\cM{\mathcal{M}}%
\global\long\def\cN{\mathcal{N}}%
\global\long\def\cO{\mathcal{O}}%
\global\long\def\cP{\mathcal{P}}%
\global\long\def\cQ{\mathcal{Q}}%
\global\long\def\cR{\mathcal{R}}%
\global\long\def\cS{\mathcal{S}}%
\global\long\def\cT{\mathcal{T}}%
\global\long\def\cW{\mathcal{W}}%
\global\long\def\kBS{\mathfrak{B}_{6}}%
\global\long\def\kR{\mathfrak{R}}%
\global\long\def\kU{\mathfrak{U}}%
\global\long\def\kUn{\mathfrak{U}_{9}}%
\global\long\def\ksU{\mathfrak{U}_{7}}%
\global\long\def\a{\alpha}%
\global\long\def\b{\beta}%
\global\long\def\d{\delta}%
\global\long\def\D{\Delta}%
\global\long\def\L{\Lambda}%
\global\long\def\g{\gamma}%
\global\long\def\om{\omega}%
\global\long\def\G{\Gamma}%
\global\long\def\d{\delta}%
\global\long\def\D{\Delta}%
\global\long\def\e{\varepsilon}%

\global\long\def\k{\mathbb{k}}%
\global\long\def\l{\lambda}%
\global\long\def\m{\mu}%
\global\long\def\o{\omega}%
\global\long\def\p{\pi}%
\global\long\def\P{\Pi}%
\global\long\def\s{\sigma}%
\global\long\def\S{\Sigma}%
\global\long\def\t{\theta}%
\global\long\def\T{\Theta}%
\global\long\def\f{\varphi}%
\global\long\def\ze{\zeta}%

\global\long\def\deg{{\rm deg}}%
\global\long\def\det{{\rm det}}%

\global\long\def\Dem{Proof: }%
\global\long\def\ker{{\rm Ker}}%
\global\long\def\im{{\rm Im}}%
\global\long\def\rk{{\rm rk}}%
\global\long\def\car{{\rm car}}%
\global\long\def\fix{{\rm Fix( }}%
\global\long\def\card{{\rm Card }}%
\global\long\def\codim{{\rm codim}}%
\global\long\def\coker{{\rm Coker}}%

\global\long\def\pgcd{{\rm pgcd}}%
\global\long\def\ppcm{{\rm ppcm}}%
\global\long\def\la{\langle}%
\global\long\def\ra{\rangle}%

\global\long\def\Alb{{\rm Alb}}%
\global\long\def\Jac{{\rm Jac}}%
\global\long\def\Disc{{\rm Disc}}%
\global\long\def\Tr{{\rm Tr}}%
\global\long\def\Nr{{\rm Nr}}%

\global\long\def\NS{{\rm NS}}%
\global\long\def\Pic{{\rm Pic}}%

\global\long\def\Km{{\rm Km}}%
\global\long\def\rk{{\rm rk}}%
\global\long\def\Hom{{\rm Hom}}%
\global\long\def\End{{\rm End}}%
\global\long\def\aut{{\rm Aut}}%
\global\long\def\SSm{{\rm S}}%
\global\long\def\psl{{\rm PSL}}%
\global\long\def\cu{{\rm (-2)}}%
\global\long\def\mod{{\rm \,mod\,}}%
\global\long\def\cros{{\rm Cross}}%
\global\long\def\nt{z_{o}}%
\global\long\def\co{\mathfrak{\mathcal{C}}_{0}}%
\global\long\def\ldt{\Lambda_{\{2\},\{3\}}}%
\global\long\def\ltd{\Lambda_{\{3\},\{2\}}}%
\global\long\def\lldt{\lambda_{\{2\},\{3\}}}%

\global\long\def\ldq{\Lambda_{\{2\},\{4\}}}%
\global\long\def\lldq{\lambda_{\{2\},\{4\}}}%

\title[Regular polygons and line operators]{Regular polygons, line operators, and elliptic modular surfaces as
realization spaces of matroids}
\author{Lukas~K\"uhne}
\address{Lukas K\"uhne, Universit\"at Bielefeld, Fakult\"at f\"ur Mathematik, Bielefeld,
Germany}
\email{lkuehne@math.uni-bielefeld.de}
\author{Xavier Roulleau}
\address{Xavier Roulleau, Universit\'e d'Angers, CNRS, LAREMA, SFR MATHSTIC,
F-49000 Angers, France}
\email{xavier.roulleau@univ-angers.fr}
\keywords{Elliptic Modular Surfaces, Line Arrangements, Matroids}
\begin{abstract}
For an integer $n\geq 7$, we investigate the matroid realization space of a specific deformation
of the regular $n$-gon along with its lines of symmetry. It turns
out that this particular realization space is birational to the
elliptic modular surface $\Xi_{1}(n)$ over the modular curve $X_{1}(n)$.

In this way, we obtain a model of $\Xi_{1}(n)$ defined over the rational
numbers. Furthermore, a natural geometric operator acts on these matroid
realizations. On the elliptic modular surface, this operator corresponds
to the multiplication by $-2$ on the elliptic curves. This provides
a new geometric approach to computing multiplication by $-2$ on elliptic
curves. 
\end{abstract}

\maketitle

\section{Introduction}

The starting point of the present paper was the search for new interesting
line arrangements,~i.e., finite union of lines in the projective
plane, by using certain operators $\L$ (respectively\ $\Psi$) acting
on line (respectively\
point) arrangements introduced in \cite{OSO}. These operators led
us to discover line arrangements related to elliptic modular surfaces,
as explained below.

The operators are defined as follows: Let $\mathfrak{m,n}$ be two
sets of integers $\geq2$. For a given line arrangement $\cC=\ell_{1}+\dots+\ell_{s}$
in $\PP^{2}$, we denote by $\cP_{\mathfrak{m}}(\cC)$ the (possibly
empty) union of the $m$-points of $\cC$, for $m\in\mathfrak{m}$,
where an $m$-point is a point where exactly $m$ lines of $\cC$
intersect. For a given point arrangement $\cP$, i.e. a finite set
of points, let $\cL_{\mathfrak{n}}(\cP)$ denote the union of $n$-rich
lines, for $n\in\mathfrak{n}$, where an $n$-rich line is a line
containing exactly $n$ points of $\cP$. The operator $\L_{\mathfrak{m,n}}$
is defined by $\L_{\mathfrak{m,n}}=\cL_{\mathfrak{n}}\circ\cP_{\mathfrak{m}}$.
For example, $\L_{\{2\},\{k\}}(\cC)$ is the union of lines containing
exactly $k$ double points of $\cC$. Similarly, the operator $\Psi_{\mathfrak{m,n}}$,
which acts on point arrangements, is defined by $\Psi_{\mathfrak{m,n}}=\cP_{\mathfrak{n}}\circ\cL_{\mathfrak{m}}$.
Once a polarization on $\PP^{2}$ is fixed, we also use the dual operator
$\cD$, which maps a line arrangement to a point arrangement and vice
versa. These operators satisfy the relation $\Psi_{\mathfrak{m,n}}=\cD\circ\L_{\mathfrak{m,n}}\circ\cD$.

In \cite{RoulleauUna}, a family $\kU$ of arrangements of $6$ lines
is described. These line arrangements have the remarkable property
that for a generic line arrangement $\cC$ in $\kU$, the line arrangement
$\ldt(\cC)$ is again in $\kU$. The singularities of $\cC\in\kU$
are only double points; the operator $\ldt$ acts as a degree $2$
map on the one dimensional parameter space of such arrangements, and
the periodic points of $\kU$ under the action of $\ldt$ are strongly
related to Ceva line arrangements, which are prominent examples of
line arrangements.

Finding other families $\kU'$ of line arrangements together with
an action of operators $\L$ is therefore a natural question. We found
an infinite family of such examples, which we describe as follows:

For $n\geq3$, let $\cP_{n}$ be the polygonal line arrangement i.e.
the union $\cP_{n}=\cC_{0}^{r}\cup\cC_{1}^{r}$ of the regular $n$-gon
$\cC_{0}^{r}$ and its $n$ lines of symmetries $\cC_{1}^{r}$.
For $n\geq7$, there exists an operator $\L$ that depends on $n$, see Equation~\eqref{eq:lambda},
 such that $\cC_{1}^{r}=\L(\cC_{0}^{r})$;
for example when $n=2k+1$, we use $\L=\L_{\{2\},\{k\}}$. The
regular $n$-gon $\cC_{0}^{r}$ has $\tfrac{n(n-1)}{2}$ double points,
which become $\tfrac{n(n-1)}{2}$ triple points on the union $\cP_{n}=\cC_{0}^{r}\cup\cC_{1}^{r}$.
Furthermore, $\cC_{1}^{r}$ has a unique singular point, the center
of the regular $n$-gon (and $\L(\cC_{1}^{r})=\emptyset$).

Consider a line arrangement $\cC_{0}\cup\cC_{1}$ of $n+n$ lines,
which has properties close to $\cP_{n}=\cC_{0}^{r}\cup\cC_{1}^{r}$
in the following sense:\\
i) The line arrangement $\cC_{0}$ is the union of $n$ lines with
$\tfrac{n(n-1)}{2}$ double points, \\
ii) The incidences, i.e., which lines meet in intersections points of higher multiplicities, between the lines in $\cC_{0}$ and the lines in
$\cC_{1}$ are the same as those between the lines in $\cC_{0}^{r}$
and $\cC_{1}^{r}$, so that $\L(\cC_{0})=\cC_{1}$. \\
Then the union $\cC_{0}\cup\cC_{1}$ has again $\tfrac{n(n-1)}{2}$
triple points. However, contrary to the case of the regular $n$-gon,
we do not impose that the $n$ lines of $\cC_{1}$ meet at a unique
point (which would rigidify the configuration). Instead we require
that $\cC_{1}$ has $\tfrac{n(n-1)}{2}$ double points, as $\cC_{0}$;
see Figure \ref{fig:MATROID} for the case $n=7$. Note that since
by construction $\cC_{1}=\L(\cC_{0})$, we will often identify the
line arrangements $\cC_{0}\cup\cC_{1}$ and $\cC_{0}$.

We show that the line arrangement $\cC_{0}\cup\cC_{1}$ has in fact
a natural labelling so that one may define the matroid $M_{n}$ associated
to a line arrangement $\cC_{0}\cup\cC_{1}$: this is the combinatorial
data describing how lines meet. The line arrangement $\cC_{0}\cup\cC_{1}$
is then said a realization of $M_{n}$. For a matroid $M$, if $\cC=(\ell_{1},\dots,\ell_{s})$
is a realization of $M$ and $\g$ is a projective transformation,
then $\g\cC=(\g\ell_{1},\dots,\g\ell_{s})$ is also a realization
of $M$. For any matroid $M$, there exists a parameter space $\cS(M)$
(respectively $\cR(M)$) of realizations of $M$, (respectively of
realizations of $M$ modulo projective transformations).
Both of these spaces are affine schemes.
 The scheme
$\cR(M)$ is called the realization space of $M$. Note that the actions
on line arrangements of the operators $\L_{\mathfrak{m,n}}$ and of
the projective transformations commute. Thus if $[\cC]$ denotes the
orbit of a line arrangement $\cC$ under $\text{PGL}_{3}$, the orbit
$\L_{\mathfrak{m,n}}([\cC]):=[\L_{\mathfrak{m,n}}(\cC)]$ is well-defined. 

The following result holds over an algebraically closed field of characteristic
$0$: 
\begin{thm}
\label{thm:MAIN1}Suppose that $n\geq7$. The realization space $\cR_{n}=\cR(M_{n})$
is two dimensional and irreducible. If $\cC_{0}\cup\cC_{1}$ is a
generic realization of $M_{n}$, then $\cC_{2}=\L(\cC_{1})$ is an
arrangement of $n$ lines, moreover $\cC_{2}$ can be labeled so that
$\cC_{1}\cup\cC_{2}$ is a realization of $M_{n}$. 
\end{thm}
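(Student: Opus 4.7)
The plan is to combine an explicit coordinate description of $\cR_n$ with a birational parametrization by the elliptic modular surface $\Xi_1(n)$: the latter controls the dimension and irreducibility, while the group law on its fibers handles the closure under $\L$.

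First, I would make the matroid $M_n$ combinatorially explicit. Labeling the lines of $\cC_0^r$ cyclically as $\ell_0,\dots,\ell_{n-1}$ and the axes of $\cC_1^r$ as $m_0,\dots,m_{n-1}$ by the reflections fixing them, each $m_a$ passes through a prescribed set $T_a$ of double points of $\cC_0^r$; for $n$ odd these sets partition the $\binom{n}{2}$ double points of $\cC_0^r$. Next I would use $\text{PGL}_3$ to fix four lines of $\cC_0$ in standard position, leaving an affine chart of dimension $2(n-4)$ on which the incidences prescribed by $M_n$ become an explicit system of polynomial equations. A naive count overshoots the expected codimension $2n-10$, so the crux is to show that these equations are highly dependent, reflecting a Cayley--Bacharach phenomenon: the $\binom{n}{2}$ double points of $\cC_0$ must lie on a common plane cubic $E$, and the collinearities defining the $m_a$ translate into the group-law relation $P+Q+R=0$ on $E$.

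Second, I would construct a rational map from $\Xi_1(n)$ to $\cR_n$ sending a pair $(E,P)$, with $P\in E$ of order $n$, to the arrangement $\cC_0$ whose lines join specified pairs of multiples of $P$ on $E$; the companion arrangement $\cC_1=\L(\cC_0)$ then arises automatically from the group law. To show the map is birational, I would invert it by recovering $E$ as the unique plane cubic through the $\binom{n}{2}$ double points of a generic $\cC_0\cup\cC_1$ and reading off the torsion point $P$ from the cyclic labeling. Since $\Xi_1(n)$ is an irreducible surface for $n\geq 7$, this yields both $\dim \cR_n=2$ and irreducibility.

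Finally, for the closure under $\L$: under the birational identification, applying $\L$ to $\cC_1$ should correspond to the endomorphism $P\mapsto -2P$ on the elliptic fibers, which preserves the $n$-torsion subgroup. Hence a generic realization $\cC_0 \cup \cC_1$ is sent to another realization $\cC_1 \cup \cC_2$ of $M_n$, after relabeling $\cC_2$. The main obstacle is to verify that $\cC_2$ consists of $n$ \emph{distinct} lines in the generic case: at the rigid regular $n$-gon the lines of $\cC_1^r$ are all concurrent and $\L(\cC_1^r)=\emptyset$, so one must show this degeneration occurs only on a proper closed subvariety of $\cR_n$; this holds because concurrency of $\cC_1$ corresponds to the lower-dimensional fixed locus of $[-2]$ on the elliptic fibers, which is a strict subvariety of $\Xi_1(n)$.
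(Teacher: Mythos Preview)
Your overall strategy --- parametrize $\cR_n$ birationally by $\Xi_1(n)$, invoke a Cayley--Bacharach argument to recover a cubic, and interpret $\L$ as multiplication by $-2$ on the fibers --- is exactly the paper's. But two of your key steps are miscast in ways that would make the argument fail. First, the cubic does not pass through the $\binom{n}{2}$ double points of $\cC_0$: for $n=7$ that is already $21$ points, and they do not lie on a plane cubic even for the regular heptagon. The correct statement (the paper's Theorem~\ref{Thm:ExiststUniCubCur}) is that the $2n$ \emph{dual} points of the lines of $\cC_0\cup\cC_1$ --- equivalently, a point realization of $M_n$ --- lie on a unique cubic; the Chasles/Cayley--Bacharach argument applies to these $2n$ points, exploiting the triple-point collinearities encoded in $M_n$.

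Second, your parametrization ``a pair $(E,P)$ with $P$ of order $n$'' describes the modular \emph{curve} $X_1(n)$, which is one-dimensional and cannot dominate a two-dimensional $\cR_n$. The paper's map $\Gamma$ uses triples $(E,t,p)$ with $t$ an $n$-torsion generator and $p$ a \emph{generic} point of $E$: one sets $\cC_0=\cD\big((p+kt)_{k\in\ZZ/n\ZZ}\big)$, and then $\cC_1=\L(\cC_0)=\cD\big((-2p+kt)_{k}\big)$. With this in place, $\L$ acts as $p\mapsto -2p$ on the fiber coordinate of $\Xi_1(n)$, and the locus where $\cC_1$ is concurrent or $\cC_2=\L(\cC_1)$ fails to have $n$ distinct lines is the torsion condition $6p\in\langle t\rangle$ (respectively $12p\in\langle t\rangle$), a proper closed subset --- not the fixed locus of $[-2]$ as you suggest.
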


We also discuss the case of positive characteristics, and we expect that the same results should be true, at least in characteristic coprime to $n$.
Here generic means that it is a generic point in the parameter space
$\cS(M)$, that is a point avoiding a finite set of hypersurfaces.
In this paper, we also discuss the case of positive characteristic,
for which some of the results of Theorem \ref{thm:MAIN1} still hold.

Let us now explain how the realization space $\cR_{n}$ is related
to elliptic modular surfaces. Recall that the modular curve $X_{1}(n)$
(for $n\geq3$) parametrizes up to isomorphisms pairs $(E,t)$ of
an elliptic curve $E$ with a point $t$ of order $n$. These curves
are fine moduli spaces. They have been studied e.g. by Deligne-Rapoport
\cite{Deligne-Rapoport}, Katz-Mazur \cite{Katz-Mazur} and Conrad
\cite{Conrad}, and are prominent objects in arithmetic geometry,
see e.g.\ \cite[App. C, Section 13]{Silverman}. The modular surface
$\Xi_{1}(n)$ is a smooth elliptic surface which is the universal
space over the modular curve $X_{1}(n)$. Shioda \cite{Shioda} studied
it by using analytic uniformization: it is a compactification of the
quotient of $\HH\times\CC$ by the action of a group $\G_{1}(n)\rtimes\ZZ^{2}$,
for the modular subgroup $\G_{1}(n)$ of $\text{SL}_{2}(\ZZ)$, where
$\HH$ is the upper half plane. Alternatively one may view $\Xi_{1}(n)$
as a (compactification of the) parameter space of triples $(E,p,t)$
where $E$ is an elliptic curve (with neutral element $O$), $p$ a point on $E$ and $t$ a generator
of a cyclic $n$-torsion subgroup of~$E$. The elliptic fibration
$\Xi_{1}(n)\to X_{1}(n)$ is the map $(E,p,t)\mapsto(E,t)$. There
is a natural multiplication by $m\in\ZZ$ map, which is a rational map on the elliptic
surface $\Xi_{1}(n)$, denoted by $[m]$. For a triple $\varphi=(E,p,t)\in\Xi_{1}(n)$,
let us choose a model of $E$ as a smooth cubic with a flex at $O=nt$,
so that one may define the labeled line arrangement 
\[
[\varphi]=\cD((p+kt)_{k\in\ZZ/n\ZZ}),
\]
which is, modulo projective automorphisms of the plane, independent
of the choice of such cubic model of $E$ (here $\cD$ is the dual
operator). Theorem \ref{thm:MAIN1} is a consequence of Theorem \ref{thm:Main-X}
below, which gives a link between the surface $\Xi_{1}(n)$ and the
realization space $\cR_{n}$. We work over an algebraically closed
field of characteristic $0$: 
\begin{thm}
\label{thm:Main-X}For $n\geq7$, the map $\psi:\varphi\mapsto[\varphi]\cup \Lambda([\varphi])$
is a degree $9$ rational map from $\Xi_{1}(n)$ to $\cR_{n}$ and
the following diagram of rational maps commutes: 
\[
\begin{array}{ccc}
\Xi_{1}(n) & \stackrel{[-2]}{\to} & \Xi_{1}(n)\\
\psi\downarrow &  & \psi\downarrow\\
{}
\cR_{n} & \stackrel{\L}{\to} & \cR_{n}
\end{array}.
\]
The map $\psi$ induces a birational map $\Xi_{1}(n)/K({3})\simeq\Xi_{1}(n)\to\mathcal{R}_{n}$,
where $K({3})$ is the kernel of the multiplication by $3$ map on
the elliptic surface $\Xi_{1}(n)$. The degree of the map $\L$ is
$4$.
\end{thm}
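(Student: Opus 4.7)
The plan is to translate the combinatorial action of $\Lambda$ into the chord-and-tangent group law on $E$, analyze the fibers of $\psi$ as orbits of $K_{3}$, and deduce the degree of $\Lambda$ from the commutative square. Fix $\varphi=(E,p,t)$ with $E\subset\PP^{2}$ a smooth plane cubic whose flex $O=[n]t$ is the group origin, set $P_{k}=p+kt$, so $[\varphi]=\cD(\{P_{k}:k\in\ZZ/n\ZZ\})$. For generic $p$, no three $P_{k}$ are collinear on $E$ (such a collinearity would force $3p\in\la t\ra$), so the $n$ dual lines are in general position with exactly $\binom{n}{2}$ double points. The secant through $P_{i}$ and $P_{j}$ meets $E$ in the third point $R_{i,j}=-P_{i}-P_{j}=-2p-(i+j)t$; its dual is the unique line through the double point $\cD(P_{i})\cap\cD(P_{j})$. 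Collecting by $m=-(i+j)\in\ZZ/n\ZZ$, each $Q_{m}:=-2p+mt$ produces a dual line $\cD(Q_{m})$ that contains exactly the required number of double points of $[\varphi]$ (for example $(n-1)/2$ when $n$ is odd, matching $\Lambda=\Lambda_{\{2\},\{(n-1)/2\}}$), so $\Lambda([\varphi])=\cD(\{Q_{m}:m\in\ZZ/n\ZZ\})$.

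Commutativity of the diagram follows immediately. Applying this dictionary to $[-2]\varphi=(E,-2p,t)$ gives $\psi([-2]\varphi)=\cD(\{-2p+kt\})\cup\cD(\{4p+kt\})$, while Theorem~\ref{thm:MAIN1} gives $\Lambda(\psi(\varphi))=\cC_{1}\cup\Lambda(\cC_{1})$, and a second application of the dictionary yields $\Lambda(\cC_{1})=\cD(\{-2(-2p)+kt\})=\cD(\{4p+kt\})$; the labelings match after the natural index shift. For the fibers of $\psi$, I would exhibit $K_{3}$ as the generic stabilizer. If $q\in E[3]$ then $3q\sim0$ on $E$, so translation-by-$q$ preserves the linear system $|3O|$ and extends to $\tau_{q}\in\mathrm{PGL}_{3}$ preserving $E$; this $\tau_{q}$ sends $\psi(E,p,t)$ to $\psi(E,p+q,t)$, so $\psi$ is $K_{3}$-invariant. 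Conversely, for generic $\varphi$ the $2n\geq14>9$ labeled points $\{P_{k}\}\cup\{Q_{k}\}$ determine $E$ uniquely, so any $\mathrm{PGL}_{3}$-equivalence between two images of $\psi$ restricts to a plane-cubic automorphism of $E$, which is of the form $\tau_{q}\circ[\pm1]$ with $q\in E[3]$; matching labels and absorbing the $[-1]$-symmetry into the identification $(E,p,t)\sim(E,-p,-t)$ already present in $\Xi_{1}(n)$ forces the residual freedom to be exactly $K_{3}$.

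Since $\Xi_{1}(n)$ and $\cR_{n}$ are both irreducible surfaces (the latter by Theorem~\ref{thm:MAIN1}) and $\psi$ is dominant, the fiber description yields $\deg(\psi)=|E[3]|=9$, and the induced map $\overline{\psi}:\Xi_{1}(n)/K_{3}\to\cR_{n}$ is birational; identifying $\Xi_{1}(n)/K_{3}\cong\Xi_{1}(n)$ via the isogeny $[3]$ produces the stated birational model of $\cR_{n}$. From the commutative square, $\deg(\psi)\cdot\deg(\Lambda)=\deg([-2])\cdot\deg(\psi)$, and $[-2]$ has fiberwise degree $|E[2]|=4$, whence $\deg(\Lambda)=4$. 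The main obstacle is the fiber analysis: one must carefully enumerate the $\mathrm{PGL}_{3}$-stabilizers of a labeled realization, secure the genericity assertion that $E$ is recovered uniquely from the $2n$ labeled points, and verify that no cubic automorphism outside $K_{3}$ survives the labeled matching -- a task that requires handling the $[\pm1]$ involution and any exotic symmetries at special $j$-invariants, and then reconciling them with the identifications already built into the moduli space $\Xi_{1}(n)$.
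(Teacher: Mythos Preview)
Your treatment of the commutative diagram (via the secant/third-point dictionary on $E$) and of the fibers of $\psi$ (as $K_3$-orbits, using the order-$18$ group of plane automorphisms of a smooth cubic and the moduli identification $(E,p,t)\sim(E,-p,-t)$) is essentially the paper's argument, and your derivation of $\deg(\Lambda)=4$ from the square is exactly what the paper leaves implicit.

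There is, however, a genuine gap. You invoke Theorem~\ref{thm:MAIN1} for the irreducibility of $\cR_n$ and then assert that $\psi$ is dominant; but in this paper Theorem~\ref{thm:MAIN1} is explicitly a \emph{consequence} of Theorem~\ref{thm:Main-X}, so this is circular. Everything you establish shows only that $\Xi_1(n)/K_3$ is birational to the \emph{image} of $\psi$ inside $\cR_n$. What is missing is the converse: that a generic realization of $M_n$ actually arises from a triple $(E,p,t)$, i.e.\ that the $2n$ dual points of an arbitrary realization lie on a cubic. A priori $\cR_n$ could have components of dimension $\ge 2$ disjoint from the image of $\psi$, and nothing in your argument rules this out.

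The paper supplies this missing step as an independent theorem (Theorem~\ref{Thm:ExiststUniCubCur}): for any point realization $\cP$ of $M_n$ there exists a unique cubic through $\cP$. The proof is an iterated application of Chasles's nine-point theorem, organized via a lemma of Green--Tao, and it genuinely uses the specific collinearity pattern $i+j+r\equiv 0$ built into $M_n$; it is not a formality. Once that is in hand, dominance and irreducibility of $\cR_n$ follow, and the remainder of your argument goes through unchanged. Your closing paragraph flags the fiber analysis as the main obstacle, but in fact that part is fine; the real obstacle is surjectivity onto $\cR_n$, which you have not addressed.
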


Here $\L$ is the line operator mentioned above that is related to the regular $n$-gon, see Equation~\eqref{eq:lambda}.
Note that, unlike Shioda's construction of $\Xi_{1}(n)$, which is
by analytic uniformization, the schemes $\cR_{n}$ are naturally defined
over $\QQ$ (and even over $\ZZ$), since these are realization spaces
of matroids. In \cite{Chai}, Chai and Faltings constructed
the universal elliptic surface 
as well as its compactifications over $\mathbb{Z}$. 
At least for $n=7$, that model are not smooth over $\mathbb{Z}$, since $\Xi_1(n)$ is a K3 surface and  there is no K3 surface over $\mathbb{Z}$ by \cite{Abrashkin} and \cite{Fontaine}. 
We note also that the realization spaces $\cR_{n}$ are affine
schemes, see e.g. \cite{Oscar}.  
The surface $\Xi_1(n)$ is the unique minimal smooth 
compactification of the quasiprojective variety parametrizing triples
$(E, p, t)$ as above. We use the same notation for these two surfaces. 
For example, this is not a problem in the above diagram, 
since the maps $\Psi,\,\Lambda$ and $[-2]$ are rational.

One can also reformulate Theorem \ref{thm:Main-X} in terms of point
arrangements instead of line arrangements, by associating to a triple
$(E,p,t)$, the labeled point arrangement $\cP=(p+kt)_{k\in\ZZ/n\ZZ}$
and by using the point operator $\Psi=\cD\circ\L\circ\cD$. By doing
so, one obtains a geometric (and algorithmic) way to compute the multiplication
by $-2$ of certain points of a cubic curve, without needing to take
the tangents to the points. What is required is the computation of
the intersection points of the lines linking the points in $\cP$.
For $n\in\{7,8\}$, we describe such sets $\cP$ in \cite{KR2}.

Let us now describe the structure of this paper, and further results
obtained: In Section \ref{sec:prelims}, we review results regarding
the operators $\L,\Psi$, the matroids, and their realization spaces.
Section~\ref{sec:Elliptic-modular-surfaces} is devoted to the proof
of both Theorem~\ref{thm:MAIN1} and \ref{thm:Main-X}.
We start by describing the matroids $M_{n}$ associated with the
regular $n$-gon and its lines of symmetry in Subsection~\ref{subsec:Description-Matro}.
Subsequently, in Subsection~\ref{subsec:A-point-realization} we prove
that a generic realization of $M_n$ has a preimage in $\Xi_1(n)$ by
showing that there exists a cubic curve that
contains the dual points of a given realization.
Conversely, we
show in the Subsections~\ref{subsec:Labelled-arrangements-of} and \ref{subsec:Converse}
that the generic points on the elliptic modular surface $\Xi_{1}(n)$,
yield realizations of the matroid $M_{n}$.
In the Subsections  \ref{subsec:A-point-realization}, \ref{subsec:Labelled-arrangements-of} and
\ref{subsec:Converse} we also
discuss the case of the fields of positive characteristic. In Section
\ref{sec:Further-constructions-and}, we treat the cases of realizations
of $M_{n}$ obtained by using the singular cubic curves, and we study
some periodic line arrangements under the action of $\L$. In Sections
\ref{sec:The-pentagon} and \ref{sec:The-hexagon}, we generalize
our constructions and results to the modular surfaces $\Xi_{1}(5)$
and $\Xi_{1}(6)$. The limit case $n=5$ is of interest because it
is especially simple: we describe a combinatorial-geometric point
operator $\Psi$ such that for any arrangement $\cP$ of $5$ points
in generic
position, the successive images of~$\cP$ by the powers
of $\Psi$ are points on the same cubic curve. These points are also
the successive powers of the multiplication by $-2$ map on that curve.
For the cases $n=5$ and $n=6$, we also establish a connection between
our operators and the pentagram map, which is another type of operator
acting on line arrangements and has been intensively studied, see
\cite{SchwartzPent}.

Finally, let us note that the construction of the elliptic modular curves $X_{1}(n)$
as a realization space of a matroid is discussed in \cite{RoulleauCurves}.
The main result of this paper is that for $n\ge 10$, the elliptic modular curve $X_{1}(n)$ is birational to the realization space of the elliptic matroid $\mathcal{T}_n$, which is the rank $3$ matroid on the ground set $\{0,1,\dots, n-1\}$ with non-bases triples that sum to $0$ modulo $n$.
The proof methods are however fairly disjoint from the present one and rely on modular forms.

\subsection*{Acknowledgments}

Our work was initiated during the \textit{Workshop on Complex and
Symplectic Curve Configurations} held in Nantes, France, in December
2022. We would like to thank the organizers for stimulating and fruitful
discussions and providing excellent working conditions.
We are grateful to Ana Maria Botero, Bert van Geemen, Keiji Oguiso
and Will Sawin for inspiring discussions, and to Pierre Deligne for
pointing out an error in a first version of this paper.
We would also like to thank the anonymous referees for their insightful and constructive comments, 
which greatly helped to improve the clarity and overall quality of this manuscript.
We used the computer algebra systems \texttt{OSCAR} \cite{Oscar}
and \texttt{MAGMA} \cite{Magma}.

LK is supported by the Deutsche Forschungsgemeinschaft (DFG, German Research Foundation) -- SFB-TRR 358/1 2023 -- 491392403 and SPP 2458 -- 539866293.
 XR is supported by the French
Centre Henri Lebesgue ANR-11-LABX-0020-01.

\section{\label{sec:prelims}Preliminaries on operators and matroids}

\subsection{\label{subsec:Line-arrangement-and}The operators $\Lambda_{\mathfrak{n,m}}$
and $\Psi_{\mathfrak{n,m}}$}

A line arrangement $\cC=\ell_{1}+\dots+\ell_{n}$ is the union of
a finite number of distinct lines in the projective plane $\PP^{2}$
over some field $\kk$. A labeled line arrangement $\cC=(\ell_{1},\dots,\ell_{n})$
is a line arrangement with a fixed order of the lines. We sometimes
add a superscript~$^{\ell}$ (resp.\ $^{u}$) when we want to emphasize
that an arrangement or related objects has (resp.\ does not have)
a labeling.

If $\cC_{1}$ and $\cC_{2}$ are two labeled line arrangements without
common lines, the union $\cC_{1}\cup\cC_{2}$ is a labeled line arrangement,
and the order of the terms is important, as $\cC_{1}\cup\cC_{2}\neq\cC_{2}\cup\cC_{1}$
if the line arrangements are non-empty. 

Results in terms of points and lines yields a dual statement, obtained
by swapping the notions of points and lines, join with intersection,
and collinear with concurrent. Let us fix $\cD$ as the dual operator
between the plane $\PP^{2}$ and its dual $\check{\PP^{2}}$, which
to a line arrangement $\cC$ associates an arrangement of points,
namely the normals of the lines of $\cC$. Concretely, we fix coordinates
$x,y,z$ so that the line $\ell:\{ax+by+cz=0\}$ yields $\cD(\ell)=(a:b:c)$,
so that we will often identify the plane and its dual by using these
coordinates. 

By duality, the operators $\Lambda_{\mathfrak{n,m}}$ defined in
the introduction have their counterpart $\Psi_{\mathfrak{n,m}}=\cP_{\mathfrak{m}}\circ\cL_{\mathfrak{n}}$
on point arrangements~$\cP$,~i.e., finite set of points in $\PP^{2}$.
For example $\Psi_{\{2\},\{4\}}$ is the operator which to a point
arrangement $\cP$ returns the set of $4$-points in the union of
the lines that contain exactly two points of $\cP$. The operators
$\Lambda_{\mathfrak{m},\mathfrak{n}}$ and $\Psi_{\mathfrak{m},\mathfrak{n}}$
are related as follows: 
\[
\Psi_{\mathfrak{m},\mathfrak{n}}=\cD\circ\Lambda_{\mathfrak{m},\mathfrak{n}}\circ\cD.
\]

For a line arrangement $\cC$ and an integer $k\geq2$, we denote
by $t_{k}=t_{k}(\cC)=|\cP_{\{k\}}(\cC)|$ the number of $k$-points
of $\cC$.

\subsection{\label{sec:Matroids}Matroids}

A matroid is a fundamental and actively studied object in combinatorics.
Matroids generalize linear dependency in vector spaces as well as
many aspects of graph theory. See e.g.~\cite{Oxley} for a comprehensive
treatment of matroids. We briefly introduce the concepts from matroid
theory that will appear in this article.
\begin{defn}
A \emph{matroid} is a pair $M=(E,\cB)$, where $E$ is a finite set
of elements called atoms and $\cB$ is a nonempty collection of subsets
of $E$, called \emph{bases,} satisfying an exchange property reminiscent
of linear algebra: If $A$ and $B$ are distinct members of $\cB$
and $a\in A\setminus B$, then there exists $b\in B\setminus A$ such
that $(A\setminus\{a\})\cup\{b\}\in\cB$. 
\end{defn}

The prime examples of matroids arise by choosing a finite set of vectors
$E$ in a vector space and declaring the maximal linearly independent
subsets of $E$ as bases.

The basis exchange property already implies that all bases have the
same cardinality, say $r$, which is called the \emph{rank} of $(E,\cB)$.
The subsets of $E$ of order $r$ that are not basis are called \emph{non-bases}.

An isomorphism between the two matroids $M_{1}=(E_{1},\cB_{1}),$
$M_{2}=(E_{2},\cB_{2})$ is a bijection from $E_{1}$ to $E_{2}$
which maps the set of bases of $M_{1}$ bijectively to the set of
bases of $M_{2}$. We denote by $\aut(M)$ the \emph{automorphism
group} of the matroid $M$, i.e., the set of isomorphisms from $M$
to $M$.

As we will be only concerned with line (or point) arrangements in
$\PP^{2}$, we only consider matroids of rank $3$ from now on. If
the ground set $E$ is of order $m$ we identify $E$ with the set
$\{1,\dots,m\}$.

Matroids originated from the following kind of examples: If $\cC=(\ell_{1},\dots,\ell_{m})$
is a labeled line arrangement, the subsets $\{i,j,k\}\subseteq\{1,\dots,m\}$
such that the lines $\ell_{i},\ell_{j},\ell_{k}$ meet in three distinct
points are the bases of a matroid $M(\cC)$ over the set $\{1,\dots,m\}$.
We say that $M(\cC)$ is the matroid associated to $\cC$.

\subsection{The realization space of a matroid}

A \emph{realization} (over some field $\mathbb{K}$) of a matroid
$M=(E,\cB)$ of rank $3$ is the converse operation to the association
$\cC\to M(\cC)$. It is represented as a $3\times m$-matrix over
$\mathbb{K}$ with non-zero columns $C_{1},\dots,C_{m}$, considered
up to a multiplication by a scalar (thus as points in $\PP^{2}$).
A subset $\{i_{1},i_{2},i_{3}\}$ of $E$ of size $3$ is a basis
if and only if the $3\times3$ minor $|C_{i_{1}},C_{i_{2}},C_{i_{3}}|$
is nonzero. We denote by $\ell_{i}$ the line whose normal vector
is the point $C_{i}\in\PP^{2}$. In this context, a realization of
$M$ is a labeled line arrangement $\cC=(\ell_{1},\dots,\ell_{m})$,
where three lines $\ell_{i_{1}},\ell_{i_{2}},\ell_{i_{3}}$ meet at
a unique point if and only if $\{i_{1},i_{2},i_{3}\}$ is a non-basis.
We may also say that the point arrangement $\cP=C_{1},\dots,C_{m}$
is a realization of $M$. The points $C_{i},C_{j},C_{k}$ are collinear
if and only if $\{i,j,k\}$ is a non-basis.

If $\cC=(\ell_{1},\dots,\ell_{m})$ is a realization of $M$ and $\g\in PGL_{3}$,
then $(\g\ell_{1},\dots,\g\ell_{m})$, the image of $\cC$ by $\g$,
is another realization of $M$; we denote by $[\cC]$ the orbit of
$\cC$ under that action of $PGL_{3}$. The \emph{realization space}
$\cR(M)$ of realizations of $M$ parametrizes the orbits $[\cC]$
of realizations. That space $\cR(M)$ is an affine scheme constructed
from a $3\times m$ matrix with unknowns as entries and relations
the ideal generated by the minors of the non-bases, from which one
removes the zero loci of the minors of the bases. Moreover, since
each column $c$ is non-zero and considered up to multiplication by
$\CC^{*}$, one can suppose that one of the entries of $c$ is a $1$.
A more detailed introduction to these realization spaces together
with a description of a software package in \texttt{OSCAR} that can
compute the equations of these spaces is given in~\cite{Oscar}.

In this article, we always assume that each subset of three elements
of the first four atoms is a basis (otherwise, we replace $M$ by
a matroid isomorphic to it). Then in the realization space $\cR(M)$,
one can always map the first four vectors of $\cC\in[\cC]$ to the
canonical basis, so that each element $[\cC]$ of $\cR(M)$ has a
canonical representative, which we will identify with $[\cC]$.

Let $M_{k}=(E_{k},\cB_{k})$, $k\in \{1,2\}$ be two matroids with $E_{1}=E_{2}=\{1,\dots,m\}$.
If $\S:M_{1}\to M_{2}$ is an isomorphism, defined by a permutation
$\s$ of $\{1,\dots,m\}$ and if $\cC=(\ell_{1},\dots,\ell_{m})$
a realization of $M_{1}$, then $\S\cdot\cC:=(\ell_{\s1},\dots,\ell_{\s m})$
is a realization of $M_{2}$. Since the action of $PGL_{3}$ commutes
with the permutations of the lines, the map $\cC\to\S\cdot\cC$ induces
an isomorphism between the realization spaces $\cR(M_{1})\to\cR(M_{2})$,
in particular the group $\aut(M)$ acts on $\cR(M)$. That action
may be not faithful (for example, the matroid with $4$ atoms and
no non-basis has automorphism group $S_{4}$ but the realization space
is a point).

\section{\label{sec:Elliptic-modular-surfaces}Elliptic modular surfaces}

In this section, we describe a relationship between the realization
spaces of certain matroids and elliptic modular surfaces. We begin
by defining these matroids, which originate from regular polygons.

\subsection{\label{subsec:Description-Matro} Matroids from regular polygons}

\subsubsection{Odd number of sides}

Let $n=2k+1\geq5$ be an odd integer. Consider $\cC_{0}=(\ell_{1},\dots,\ell_{n})$
the lines of the regular $n$-gon in the real plane. We label the
lines $\ell_{j}$ anti-clockwise (see Figure \ref{fig:MATROID} for
the case $n=7$) and we consider the index $j$ of $\ell_{j}$ in
$\ZZ/n\ZZ$. For $i\neq j$ in $\ZZ/n\ZZ$, we denote by $p_{i,j}$
the intersection point of the lines $\ell{}_{i}$ and $\ell_{j}$.

The line arrangement $\cC_{0}$ has $\frac{n(n-1)}{2}$ double points
and these points have the property that for any $r\in\ZZ/n\ZZ$, the
$k=\tfrac{n-1}{2}$ double points $p_{i,j}$ ($i\neq j$) such that
\[
\,i+j+r=0\,\mod n
\]
are collinear. Let us denote by $\ell_{r}'$ the line containing these
$k$ points. Figure \ref{fig:CANO-label} illustrates this labeling
for the case $n=7$.

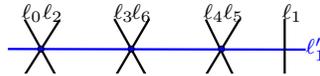
\begin{figure}[h]
\begin{centering}
\begin{tikzpicture}[scale=0.4]
\clip(-2.,1.2) rectangle (9.98,3.5);
\draw [line width=0.3mm] (-0.54,2.98)-- (0.58,0.96);
\draw [line width=0.3mm] (0.62,2.96)-- (-0.64,0.98);
\draw [line width=0.3mm] (2.44,3.)-- (3.56,1.);
\draw [line width=0.3mm] (3.6,3.)-- (2.42,1.);
\draw [line width=0.3mm] (5.48,2.98)-- (6.46,0.98);
\draw [line width=0.3mm] (6.6,2.98)-- (5.4,1.);
\draw [line width=0.3mm,color=blue] (-1.1,2.)-- (8.8,1.98);
\draw [line width=0.3mm] (8.1,3.)-- (8.1,1.);
\begin{scriptsize}
\draw [fill=blue] (0.,2.) circle (1mm);
\draw [fill=blue] (3.,2.) circle (1mm);
\draw [fill=blue] (6.,2.) circle (1mm);
\draw[color=black] (-0.3,3.2) node {$\ell_0$};
\draw[color=black] (0.4,3.2) node {$\ell_6$};
\draw[color=black] (2.75,3.2) node {$\ell_1$};
\draw[color=black] (3.35,3.2) node {$\ell_5$};
\draw[color=black] (5.74,3.2) node {$\ell_2$};
\draw[color=black] (6.4,3.2) node {$\ell_4$};
\draw[color=blue] (9.14,2.) node {$\ell_1'$};
\draw[color=black] (8.35,3.2) node {$\ell_3$};
\end{scriptsize}
\end{tikzpicture}
\par\end{centering}
\caption{\label{fig:CANO-label}Schematic picture of the labeling for $n=7$
and $r=1$.}
\end{figure}

The lines $\ell_{r}',\,r\in\ZZ/n\ZZ$ are the $n$ lines of symmetries
of $\cC_{0}$: the line arrangement $\cC_{1}=(\ell_{1}',\dots,\ell_{n}')$
is the union of the lines passing though the center and one vertex
of the regular $n$-gon.

The labelled line arrangement $\cC_{0}\cup\cC_{1}$ of $2n$ lines
is known as the regular line arrangement $\cA(2n)$; it is a simplicial
line arrangement.
\begin{rem}
From the symmetries of the polygon, the $\frac{n(n-1)}{2}$ double
points of $\cC_{0}$ are the $\frac{n(n-1)}{2}$ triple points of
$\cC_{0}\cup\cC_{1}$.
\end{rem}

\begin{defn}
\label{def:Matroid-Mk}Let $M_{n}$ denote the matroid obtained from
the labeled arrangement $\cC_{0}\cup\cC_{1}$ by removing, from the
matroid $M(\cC_{0}\cup\cC_{1})$, the non-bases associated with the
central singularity. We denote by $\cR_{n}$ the realization space
of $M_{n}$. 
\end{defn}

For example, the matroid $M_{7}$ can be obtained from Figure \ref{fig:MATROID}.
Geometrically, by construction, a realization of $M_{n}$ is a deformation
of the union $\cC_{0}\cup\cC_{1}$ of the regular $n$-gon and its
lines of symmetry, that preserves the incidences between the $2n$
lines, except for those at the central point of symmetry. As a result,
the central point is replaced by $\frac{n(n-1)}{2}$ double points.

For $n=2k+1\ge7$, one has $\Lambda_{\{2\},\{k\}}(\cC_{0})=\cC_{1}$.
Let us fix $c\in\ZZ/n\ZZ$. We remark that the union $U$ of the pairs
$\{i,j\},\,i\neq j$ such that $i+j+c=0$ satisfies the relation $U\cup\{-\tfrac{c}{2}\}=\ZZ/n\ZZ$,
where $-\tfrac{c}{2}\notin U$. This implies that the $k$ points
$p_{i,j}\,\text{with }\,i+j+c=0\,\mod n$ are also collinear double
points of the line arrangement $\cC_{0}\setminus\{\ell_{-\tfrac{c}{2}}\}=\sum_{i\neq-\tfrac{c}{2}}\ell_{i}$.
That also implies that the number of double points of $\cC_{0}\setminus\{\ell_{-\tfrac{c}{2}}\}$
on the lines $\ell'_{a}$ with $a\neq c$ is $k-1$. Therefore, one
has the equality 
\[
\Lambda_{\{2\},\{k\}}(\cC_{0}\setminus\{\ell_{-\tfrac{c}{2}}\})=\ell_{c}',
\]
so that we may consider $\Lambda_{\{2\},\{k\}}$ as an operator acting
on labelled line arrangements as follows: 

For any realization $\cC_{0}'\cup\cC_{1}'$ of $M_{n}$, since the
incidences between the lines in $\cC_{0}'$ and the lines $\cC_{1}'$
are the same as for the lines in $\cC_{0}$ and $\cC_{1}$ of the
regular $n$-gon, (except for the central singularity, but this is
not relevant), one also has, for $n\ge7$, that $\Lambda_{\{2\},\{k\}}^{\ell}(\cC_{0})=\cC_{1}$,
the $c^{th}$ line of $\cC_{1}$ is given by $\Lambda_{\{2\},\{k\}}(\cC_{0}\setminus\{\ell_{-\tfrac{c}{2}}\})$,
for $\cC_{0}=(\ell_{1},\dots,\ell_{n})$.

\begin{figure}[h]
\begin{centering}

\begin{tikzpicture}[scale=0.25]
\clip(-21.365,-15.33) rectangle (18.02,17.88);
\draw [line width=0.3mm,domain=-20.365:18.02] plot(\x,{(-10.302397111536783-1.8704694055762006*\x)/-2.34549444740409});
\draw [line width=0.3mm,domain=-20.365:18.02] plot(\x,{(-7.956902664132689-1.8704694055762001*\x)/2.345494447404089});
\draw [line width=0.3mm,domain=-20.365:18.02] plot(\x,{(-7.958578673150907--0.6675628018689426*\x)/2.9247837365454705});
\draw [line width=0.3mm,domain=-20.365:18.02] plot(\x,{(--10.883362409696378-0.6675628018689439*\x)/2.9247837365454705});
\draw [line width=0.3mm] (-3.,-15.33) -- (-3.,17.88);
\draw [line width=0.3mm,domain=-20.365:18.02] plot(\x,{(-9.00376595807958--2.702906603707257*\x)/1.3016512173526746});
\draw [line width=0.3mm,domain=-20.365:18.02] plot(\x,{(-10.305417175432254--2.702906603707257*\x)/-1.3016512173526738});
\draw [line width=0.3mm,color=blue,domain=-20.365:18.02] plot(\x,{(-4.226434953898144-0.*\x)/-8.452869907796291});
\draw [line width=0.3mm,color=blue,domain=-20.365:18.02] plot(\x,{(--9.502155104945505-8.240938811152406*\x)/17.11248576920136});
\draw [line width=0.3mm,color=blue,domain=-20.365:18.02] plot(\x,{(--2.3522804693791564-10.276282612990718*\x)/2.34549444740409});
\draw [line width=0.3mm,color=blue,domain=-20.365:18.02] plot(\x,{(-2.340052480306287-8.240938811152398*\x)/-6.571929401302233});
\draw [line width=0.3mm,color=blue,domain=-20.365:18.02] plot(\x,{(-4.2318769209959335--8.240938811152395*\x)/-6.571929401302231});
\draw [line width=0.3mm,color=blue,domain=-20.365:18.02] plot(\x,{(-0.006786021975052847--10.27628261299071*\x)/2.345494447404089});
\draw [line width=0.3mm,color=blue,domain=-20.365:18.02] plot(\x,{(--4.223414890002658--4.573376009283455*\x)/9.496713137847706});
\begin{scriptsize}
\draw [fill=white] (0.15,0.5) circle (7mm);
\draw [fill=black] (-3.,2.) circle (2.6mm);
\draw[color=black] (-3.8,2.99) node {$p_{23}$};

\draw [fill=black] (-3.,-1.) circle (2.6mm);
\draw[color=black] (-2.41,-0.1875) node {$p_{34}$};

\draw [fill=black] (-0.6545055525959113,-2.8704694055762) circle (2.6mm);
\draw[color=black] (-0.07,-2.0325) node {$p_{45}$};

\draw [fill=black] (2.2702781839495594,-2.2029066037072575) circle (2.6mm);
\draw[color=black] (2.855,-1.3575) node {$p_{56}$};

\draw [fill=black] (3.571929401302234,0.5) circle (2.6mm);
\draw[color=black] (4.99,1.0425) node {$p_{67}$};

\draw [fill=black] (2.2702781839495603,3.2029066037072567) circle (2.6mm);
\draw[color=black] (2.855,4.0425) node {$p_{17}$};

\draw [fill=black] (-0.65450555259591,3.8704694055762006) circle (2.6mm);
\draw[color=black] (-0.07,4.7175) node {$p_{12}$};
\draw[color=black] (-18,-10.99) node {$\ell_2$};
\draw[color=black] (14.35,-13.5) node {$\ell_4$};
\draw[color=black] (17,2.1325) node {$\ell_5$};
\draw[color=black] (-19.735,7.5) node {$\ell_1$};
\draw[color=black] (-2.1,-14.2) node {$\ell_3$};
\draw[color=black] (10.6,17.1825) node {$\ell_6$};
\draw[color=black] (-5.3,17.1825) node {$\ell_7$};

\draw [fill=black] (-4.880940506494056,0.5) circle (2.5mm);
\draw[color=black] (-4.9,1.5525) node {$p_{24}$};

\draw[color=blue] (-19.6,1.2525) node {$\ell_1'$};

\draw [fill=black] (-12.49671313784771,6.573376009283462) circle (2.5mm);
\draw[color=black] (-11.905,7.3275) node {$p_{14}$};

\draw [fill=black] (-12.496713137847706,-5.573376009283455) circle (2.5mm);
\draw[color=black] (-12.905,-4.8225) node {$p_{25}$};

\draw [fill=black] (-3.,-13.146752018566911) circle (2.5mm);
\draw[color=black] (-4.3,-12.9) node {$p_{36}$};

\draw [fill=black] (8.84220758525179,-10.443845414859652) circle (2.5mm);
\draw[color=black] (9.925,-9.6825) node {$p_{47}$};

\draw [fill=black] (14.112485769201351,0.5) circle (2.5mm);
\draw[color=black] (14.69,1.2525) node {$p_{15}$};

\draw [fill=black] (8.842207585251794,11.443845414859656) circle (2.5mm);
\draw[color=black] (8.125,12.1875) node {$p_{26}$};

\draw [fill=black] (-3.,14.146752018566918) circle (2.5mm);
\draw[color=black] (-1.71,14.1875) node {$p_{37}$};

\draw [fill=black] (-3.,4.405813207414516) circle (2.5mm);
\draw[color=black] (-2.41,5.1675) node {$p_{13}$};

\draw [fill=black] (-3.,-3.4058132074145138) circle (2.5mm);
\draw[color=black] (-3.99,-2.9625) node {$p_{35}$};

\draw [fill=black] (1.2264349538981438,-4.3704694055762) circle (2.5mm);
\draw[color=black] (0.0,-4.275) node {$p_{46}$};

\draw [fill=black] (4.615772631353649,2.667562801868942) circle (2.5mm);
\draw[color=black] (6.195,2.925) node {$p_{16}$};

\draw [fill=black] (1.2264349538981456,5.3704694055762) circle (2.5mm);
\draw[color=black] (1.8,6.6125) node {$p_{27}$};

\draw [fill=black] (4.615772631353649,-1.667562801868944) circle (2.5mm);
\draw[color=black] (5.195,-0.9075) node {$p_{57}$};

\draw[color=blue] (15.,-8.) node {$\ell_2'$};
\draw[color=blue] (2.04,-13.1825) node {$\ell_4'$};
\draw[color=blue] (-12.5,-14) node {$\ell_6'$};
\draw[color=blue] (-12.,17.1825) node {$\ell_3'$};
\draw[color=blue] (3.,17.1825) node {$\ell_5'$};
\draw[color=blue] (14.,8.2) node {$\ell_7'$};
\end{scriptsize}
\end{tikzpicture}
\par\end{centering}

\caption{\label{fig:MATROID}A line arrangement (almost) realizing the matroid
$M_{7}$.}
\end{figure}
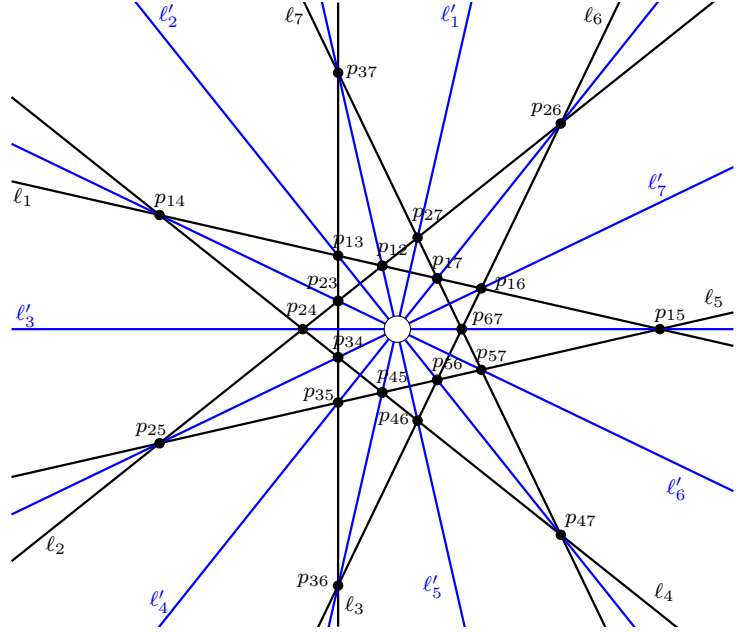

\subsubsection{Even number of sides}

Let $n=2k\geq6$ be an even integer. Let $\cC_{0}=(\ell_{1},\dots,\ell_{n})$
be the union of the lines forming a regular $n$-gon. Similar to the
case where $n$ is odd, we label the lines $\ell_{j}$ in a anti-clockwise
direction and consider the index $j$ of $\ell_{j}$ in $\ZZ/n\ZZ$.
Let $\cC_{1}$ denote the $n$ lines of symmetry. The line arrangement
$\cC_{0}$ has $\tfrac{n(n-1)}{2}$ double points, and the lines of
symmetry contain either $k$ or $k-1$ double points of $\cC_{0}$. 

For two lines $\ell_{i}\neq\ell_{j}$, let $p_{i,j}$ be the intersection
point of $\ell_{i}$ and $\ell_{j}$. For $r\in\ZZ/n\ZZ$, we define
the line $\ell'_{r}$ of $\cC_{1}=(\ell_{1}',\dots,\ell_{n}')$ as
the line containing the points $p_{i,j}$ such that $i\neq j$ and
$i+j+r=0\mod n$. There are $n/2-1$ (respectively $k=n/2$) such
points if $r$ is even (respectively odd). 

As in Definition \ref{def:Matroid-Mk}, we define the matroid $M_{n}$
to be the matroid~$M(\cC_{0}\cup\cC_{1})$, where the non-bases from
the central intersection point are removed. Analogously, we define
$\cR_{n}$ to be the realization space of the matroid $M_{n}$.

For $n\ge8$, one has $\L_{\{2\},\{k-1,k\}}^{u}(\cC_{0})=\cC_{1}$.
The labeling of the lines of $\cC_{1}$ as described above allows
us to define a labeled operator such that $\L_{\{2\},\{k-1,k\}}^{\ell}(\cC_{0})=\cC_{1}$
as follows: The operator $\L_{\{2\},\{k-1,k\}}^{\ell}$ associates
to a labeled line arrangement of $n$ lines, the union of the (possibly
empty) set of lines $\ell_{r}'$ such that $\ell_{r}'$ contains the
points $p_{i,j}$ with $i\neq j$ and $i+j+r=0\mod n$, where $p_{i,j}=\ell_{i}\cap\ell_{j}$.

\subsubsection{Explicit description of $M_{n}$}

One may define the matroid $M_{n}$ as follows: its set of atoms
is the disjoint union 
$E=\ZZ/n\ZZ\mathring{\cup}\widetilde{\ZZ/n\ZZ}$ where
$\widetilde{\ZZ/n\ZZ}$ is a disjoint copy of $\ZZ/n\ZZ$. 
The non-bases are the triples
$\{i,j;r\}\subset E$ such that 
$i,j\in\ZZ/n\ZZ$ and $r\in\widetilde{\ZZ/n\ZZ}$
with 
\[
i+j+r = 0 \text{ in }\ZZ/n\ZZ.
\]
For $a\in(\ZZ/n\ZZ)^{*}$ and $b\in\ZZ/n\ZZ$, the triple $\{i,j;r\}$
is a non-basis if and only if $\{ai+b,aj+b;ar-2b\}$ is a non-basis.
Therefore, the group $\aut(M_{n})$ of automorphisms of $M_{n}$ (i.e.
the group of bijections of $E$ that preserve the set of non-bases)
contains the group $\ZZ/n\ZZ\rtimes(\ZZ/n\ZZ)^{*}$ of invertible
affine transformations of $\ZZ/n\ZZ$.
It is a simple but lengthy
exercise to check that in fact $\ZZ/n\ZZ\rtimes(\ZZ/n\ZZ)^{*}=\aut(M_{n})$,
we omit the proof as we will not use it. But we observe that there
is a group $\ZZ/n\ZZ\rtimes(\ZZ/n\ZZ)^{*}$ acting on the surface $\Xi_1(n)$ where
$\ZZ/n\ZZ$ acts by the translation via the $n$-torsion sections,
and $a\in(\ZZ/n\ZZ)^{*}$ acts through the map $(E,t,p)\to(E,at,p)$.
In the cases of $n=7$ and $8$, the action of $\aut(M_{n})$ on $\Xi_{1}(n)$ is faithful, see \cite{KR2}.

\begin{rem}
The realization space $\cR(\mathcal{T}_{n})$
of the matroid $\mathcal{T}_n$ whose ground set is $\ZZ/n\ZZ$, and the non-bases are
triples $\{i,j,r\}\subset\ZZ/n\ZZ$ such that $i+j+r=0$ is studied in~\cite{RoulleauCurves}.
For $n\geq10$,
it is shown that $\cR(\mathcal{T}_{n})$ is an open sub-scheme of the modular
curve $X_{1}(n)$. 
\end{rem}

\subsection{\label{subsec:A-point-realization}A point realization of $M_{n}$
is on a unique cubic curve}

Let $\mathbb{K}$ be a field of characteristic $\ell\geq0$. For $n\geq7$,
let 
\[
\cP=(p_{i})_{i\in\ZZ/n\ZZ}\cup(q_{i})_{i\in\ZZ/n\ZZ}
\]
be a point arrangement which is a realization of $M_{n}$ over the
field $\mathbb{K}$.  The aim of this section is to prove the following result:
\begin{thm}
\label{Thm:ExiststUniCubCur}There exists a unique cubic curve containing
the realization~$\cP$.
\end{thm}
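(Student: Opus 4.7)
My plan is to establish uniqueness and existence separately.

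\textbf{Uniqueness.} Suppose that $F$ and $F'$ are two linearly independent cubic forms in $\kk[x,y,z]_3$ both vanishing on $\cP$. Since $|\cP| = 2n \geq 14 > 9$, B\'ezout forces the curves $\{F=0\}$ and $\{F'=0\}$ to share a common component $D$. The matroid $M_n$ has the property that every three-element subset of unprimed atoms is a basis (the only non-bases are the triples $\{i,j;r\}$ with $i+j+r \equiv 0$), so no four points of $\cP$ are collinear. If $D$ is a line, then $|\cP \cap D| \leq 3$ and the residual conics $F/D$ and $F'/D$ both vanish on the $\geq 2n-3 \geq 11$ remaining points of $\cP$; two conics sharing $\geq 5$ points must share a component, and a short case analysis on the type of the shared sub-component (line vs.\ conic, using again that no four points of $\cP$ are collinear) yields $F = F'$. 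If $D$ is a conic, then $\cP \subset D \cup (L \cap L')$ with $L \neq L'$ the residual lines, so $|\cP \cap D| \geq 2n-1 \geq 13$; a reducible conic would contain at most $6$ points of $\cP$, while an irreducible conic meets each of the $\binom{n}{2}$ configuration lines in at most $2$ points, and a counting argument on (line, off-$D$-point) incidences produces the contradiction.

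\textbf{Existence.} The strategy is to apply the Cayley--Bacharach theorem. For distinct $a,b \in \ZZ/n\ZZ$, write $\ell_{ab}$ for the configuration line through $p_a, p_b, q_{-a-b}$; each such line contains exactly three points of $\cP$. I would look for two reducible cubics
\[
C_1 = \ell_{a_1 b_1}\cdot \ell_{a_2 b_2}\cdot \ell_{a_3 b_3}, \qquad C_2 = \ell_{c_1 d_1}\cdot \ell_{c_2 d_2}\cdot \ell_{c_3 d_3},
\]
chosen so that all nine cross-intersections $\ell_{a_i b_i} \cap \ell_{c_j d_j}$ lie in $\cP$. The intersection $\ell_{ab} \cap \ell_{cd}$ lies in $\cP$ precisely when the two lines share either a common $p$-atom ($\{a,b\} \cap \{c,d\} \neq \emptyset$) or a common $q$-atom ($a+b \equiv c+d \bmod n$). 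For $n$ sufficiently large, one can take the three lines of $C_1$ all through a fixed $q_r$ and the three lines of $C_2$ all through a fixed $q_{r'}$ with $r \neq r'$, choosing the underlying pairs so that every cross pair shares a $p$-vertex; for small $n$ one needs a mixed construction combining $p$- and $q$-sharings.

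Once such $C_1, C_2$ are found, Cayley--Bacharach shows the nine points of $C_1 \cap C_2 \subset \cP$ impose only $8$ linear conditions on the $10$-dimensional space $\kk[x,y,z]_3$, leaving a pencil of cubics through them. To extend this to a cubic through all of $\cP$, I would invoke further Cayley--Bacharach relations coming from additional $9$-tuples of configuration-line intersections lying in $\cP$; each relation provides a linear dependence among evaluation functionals at points of $\cP$. The rich collinearity structure of $M_n$ furnishes enough such relations to force the rank of the full evaluation map $\kk[x,y,z]_3 \to \kk^{\cP}$ to drop to at most $9$, so a nonzero cubic vanishing on $\cP$ exists. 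The main obstacle is the first step: producing two reducible cubics whose complete intersection lies entirely in $\cP$. For $n \geq 11$ this is transparent from the two-pencil construction above, but the small cases $7 \leq n \leq 10$ require a tailored combinatorial design, which is the technical crux of the argument.
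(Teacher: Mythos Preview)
Your uniqueness argument is correct and more self-contained than the paper's, which simply inherits uniqueness from an external result (Lemma~4.4 of Green--Tao). The B\'ezout cascade you sketch goes through once one observes, as you do, that no four points of $\cP$ are collinear.

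The existence argument, however, has a genuine gap: the proposed construction of $C_1,C_2$ cannot work for \emph{any} $n$. If the three lines $L_1,L_2,L_3$ of $C_1$ all pass through $q_r$, their $p$-vertex sets are three \emph{pairwise disjoint} pairs $\{a_i,-r-a_i\}$ (disjoint because distinct configuration lines through $q_r$ meet only at $q_r$). A configuration line $M_j$ through $q_{r'}$ carries exactly two $p$-vertices, so by pigeonhole it can share a $p$-vertex with at most two of the $L_i$. Hence no single $M_j$ can meet all three $L_i$ inside $\cP$, and the $3\times 3$ requirement fails. The ``two pencils through $q_r,q_{r'}$'' idea is therefore doomed, not just for small $n$.

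The paper sidesteps this obstruction with a different $3\times 3$ pattern (borrowed from Green--Tao's Lemma~4.3). One takes nine points split into three blocks, say
\[
q_{i_0-1},\,q_{i_0},\,q_{i_0+1};\qquad p_{j_0-1},\,p_{j_0},\,p_{j_0+1};\qquad p_{k_0-1},\,p_{k_0},\,p_{k_0+1}
\]
with $i_0+j_0+k_0=0$, the $j$-block taken among negative indices and the $k$-block among positive ones. The six configuration lines $\{q_i,p_j,p_k:i+j+k=0\}$ through these nine points split into two Latin-square triples, and those triples are the cubics $C_1,C_2$; neither triple is concurrent, which is exactly what makes each line of one cubic meet all three lines of the other inside $\cP$. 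This is the classical Chasles configuration. The paper then iterates it---citing Green--Tao's Lemma~4.4 for the bulk and applying Chasles a few more times by hand for the leftover points $p_0$ (and $p_{n/2},q_{n/2}$ when $n$ is even)---to sweep all $2n$ points onto a single cubic. Your high-level plan of accumulating Cayley--Bacharach relations until the evaluation map has rank at most $9$ is the right picture, but the first step needs this non-concurrent grid rather than two pencils, and the iteration needs to be organized carefully (the paper outsources this to \cite{GT}).
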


Let $i_{0},j_{0},k_{0}$ be integers such that $i_{0}+j_{0}+k_{0}=0$.
Let 
\[
u_{i_{0}-1},u_{i_{0}},u_{i_{0}+1};\,v_{j_{0}-1},v_{j_{0}},v_{j_{0}+1};\,w_{k_{0}-1},w_{k_{0}},w_{k_{0}+1},
\]
be $9$ distinct points on the projective plane. Assume that for all
indices $i,j,k$ such that $i+j+k=0$, the points $u_{i},v_{j},w_{k}$
are collinear. For the proof of Theorem \ref{Thm:ExiststUniCubCur},
we will need the following result, stated in \cite[Lemma 4.3]{GT}:
\begin{lem}
\label{lem:CHASLES} Any cubic curve passing through eight of these
points must also pass through the ninth point.
\end{lem}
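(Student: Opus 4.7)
The plan is to realize the nine points as the complete intersection of two distinct degenerate cubics and then invoke the classical Cayley--Bacharach theorem in the pencil they generate.

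First I would normalize the indexing by setting $a=i-i_{0}$, $b=j-j_{0}$, $c=k-k_{0}$, so that $a,b,c\in\{-1,0,1\}$ and the collinearity hypothesis becomes: $u_{i_{0}+a}, v_{j_{0}+b}, w_{k_{0}+c}$ are collinear whenever $a+b+c=0$. The triples $(a,b,c)\in\{-1,0,1\}^{3}$ with $a+b+c=0$ are exactly the seven
\[
(0,0,0),\ (1,-1,0),\ (-1,1,0),\ (1,0,-1),\ (-1,0,1),\ (0,1,-1),\ (0,-1,1),
\]
so we have seven lines, each through three of the nine points.

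Next I would exhibit two partitions of the nine-point set into three disjoint collinear triples drawn from this list. Since each of the nine points corresponds to a single coordinate value on one of the three ``axes'' (the $u$'s, $v$'s, $w$'s), a valid partition consists of three triples whose $a$-values, $b$-values, and $c$-values each run over $\{-1,0,1\}$. A short case check shows there are exactly two such partitions,
\[
\Pi_{1}=\{(1,-1,0),\,(-1,0,1),\,(0,1,-1)\},\qquad \Pi_{2}=\{(-1,1,0),\,(1,0,-1),\,(0,-1,1)\}.
\]
Writing $C_{1}$ (resp.\ $C_{2}$) for the union of the three lines in $\Pi_{1}$ (resp.\ $\Pi_{2}$), each $C_{i}$ is a degenerate cubic whose vanishing locus contains all nine points.

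Then I would check that $C_{1}$ and $C_{2}$ share no common line component: any two triples from $\Pi_{1}\cup\Pi_{2}$ share at most one point, and since the nine points are distinct this forces the six lines to be pairwise distinct; no four of the nine points can be collinear except in degenerate configurations which we exclude. Hence by B\'ezout, $C_{1}\cap C_{2}$ is a zero-dimensional scheme of length nine, and the nine given points must be exactly this intersection (each a simple point). A standard linear-algebra count on the ten-dimensional space of ternary cubics shows that the eight chosen points impose independent conditions and the cubics vanishing on them form a two-dimensional vector space spanned by $C_{1}$ and $C_{2}$; any cubic $C$ through eight of the nine points therefore lies in the pencil $\lambda C_{1}+\mu C_{2}$ and so automatically vanishes at the ninth.

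The main obstacle I anticipate is Step 3, the verification that $C_{1}$ and $C_{2}$ genuinely share no component and that the eight points impose independent conditions. Handling this cleanly requires ruling out coincidental extra collinearities among the nine points, which follows from their distinctness together with the fact that each of the seven hypothesized lines is determined by a distinct triple of them; once this is in place the pencil argument closes the proof immediately.
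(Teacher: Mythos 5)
Your proof follows essentially the same route as the paper: both identify the same two partitions of the nine points into disjoint collinear triples, take the corresponding pair of totally reducible cubics, and conclude by Chasles/Cayley--Bacharach (the paper's $\ell_{1,0,-1}+\ell_{0,-1,1}+\ell_{-1,1,0}$ and $\ell_{0,1,-1}+\ell_{1,-1,0}+\ell_{-1,0,1}$ are exactly your $\Pi_{2}$ and $\Pi_{1}$). The one caveat is your final step: that the eight points impose independent conditions on cubics is not a routine linear-algebra count but is precisely the content of Chasles's theorem, which the paper simply cites (see \cite{EGH}), and you would do better to cite it as well rather than assert it.
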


Recall that Chasles's Theorem (\cite[Theorem CB3]{EGH}) states that a cubic curve containing
$8$ points among the $9$ intersection points of two cubic curves
necessarily contains the ninth point. Lemma \ref{lem:CHASLES} is
an application of Chasles's Theorem to the cubic curves $\ell_{1,0,-1}+\ell_{0,-1,1}+\ell_{-1,1,0}$
and $\ell_{0,1,-1}+\ell_{1,-1,0}+\ell_{-1,0,1}$, where $\ell_{i,j,k}$
denotes the line passing through the points $u_{i},v_{j},w_{k}$ such
that $i+j+k=0$. Chasles's Theorem holds over any field, see e.g.
\cite[Introduction]{EGH}. 
\begin{proof}[Proof of Theorem \ref{Thm:ExiststUniCubCur}] Let $\{i,j\}\subset\ZZ/n\ZZ$
be a subset of order two. By construction of the matroid $M_{n}$,
the realization $\cP$ is such that there exists a line $l_{i,j,k}$
containing $p_{i},p_{j}$ and $p_{k}$ if and only if $i+j+k=0$.
Moreover, when this condition is satisfied, the points $p_{i},p_{j},p_{k}$
are the only points of $\cP$ lying on the line $l_{i,j,k}$. 

Let $k'$ be the integer such that $n=2k'+1$ or $n=2k'+2$, depending
on the case. Define the sets

\[
I=\{-k',\dots,k'\},\,J=\{-k',\dots,-1\},\,K=\{1,\dots,k'\},
\]
and define the points $(u_{i})_{i\in I},\,(v_{j})_{j\in J}$ and $(w_{k})_{k\in K}$
as follows:
\[
\begin{array}{c}
u_{i}=q_{i},\,i\in I,\,\,\,v_{j}=p_{j},\,j\in J,\,\,\,w_{k}=p_{j},\,k\in K.\end{array}
\]
For $i\in I,\,j\in J,\,k\in K$, the points $u_{i},v_{j}$ and $w_{k}$
are collinear if and only if $i+j+k=0$. We can therefore apply \cite[Lemma 4.4]{GT}
to conclude that the points $q_{i},\,i\in\{-k',\dots,k'\}$ and $p_{j},\,j\in\{-k',\dots,k'\}\setminus\{0\}$
lie on a unique cubic curve $\g$. Lemma 4.4 of \cite{GT} is derived
by repeatedly using Lemma \ref{lem:CHASLES}. Although it is stated
for point arrangements over $\RR$, we have verified that the proof
holds over arbitrary fields.

If $n$ is odd (respectively, even), it remains to show that the
cubic $\g$ contains $p_{0}$ (respectively, $p_{0},p_{k}$ and $q_{k}$).
Lemma \ref{lem:CHASLES} can be used to prove that these points belongs
to the cubic $\g$. For example, the cubic $\g$ contains the $8$
points
\[
\text{\ensuremath{\cP}}_{8}=\{p_{-1},p_{1},q_{1},q_{2},q_{3},q_{-3},q_{-2},q_{-1}\}.
\]
Since the $9$ points $p_{-1},p_{0},p_{1};q_{1},q_{2},q_{3};q_{-3},q_{-2},q_{-1}$
satisfy the hypotheses of Lemma \ref{lem:CHASLES}, the cubic $\g$
must contain the point $p_{0}$. The remaining cases are similar,
and we leave their proofs to the reader.
\end{proof}

\subsection{\label{subsec:Labelled-arrangements-of}Arrangements of translates
of a point by torsion points}

In this section, we work over an algebraically closed field $\mathbb{K}$,
with no assumptions on its characteristic $\ell$. Let $E\hookrightarrow\PP^{2}$
be an elliptic curve over $\kk$, with neutral element $O$. Recall
that if $\ell=0$ or if $q$ is coprime to $\ell>0$, the group of
$q$-torsion points is $E[q]\simeq(\ZZ/q\ZZ)^{2}$, and if $\ell>0$,
the group $E[\ell^{m}]$ of $\ell^{m}$-torsion points of $E$ is
either trivial or $E[\ell^{m}]\simeq\ZZ/\ell^{m}\ZZ$ (which is the
case for $E$ generic).

We thus make the following hypothesis on the elliptic curve $E$:
there exists a cyclic sub-group $T_{O}$ of $E$ of order $n>1$. 

For $t\in T_{O}$ and a point $p$ of $E$, let us denote by $p_{t}$
the translate $p_{t}=p+t$. We define the labelled arrangement $T_{p}$
as
\[
T_{p}=(p_{t})_{t\in T_{O}}=(p+t)_{t\in T_{O}}.
\]
Recall that $\cD(\cP)$ denotes the line arrangement dual to a point
arrangement $\cP$. If $n=2k+1$ is odd, we define
\begin{equation}\label{eq:lambda}
\L = \begin{cases}
	\L_{\{2\},\{k\}} & \text{if $n=2k+1$ is odd and}\\
	 \L_{\{2\},\{k-1,k\}} &\text{if $n=2k$ is even} . 
\end{cases}
\end{equation}
\begin{thm}
\label{thm:T-mois-deux}Suppose that $n\geq7$ and assume that $6p\notin T_{O}$.
Then $\L(\cD(T_{p}))=\cD(T_{-2p})$ and the union $\cD(T_{p})\cup\cD(T_{-2p})$
is a realization of the matroid $M_{n}$. 
\end{thm}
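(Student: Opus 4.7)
The plan is to translate concurrence of lines in $\PP^2$ into the group law on $E$: three distinct points of $E$ are collinear iff they sum to $O$. Fix a generator $t_0$ of $T_O$, write $p_k = p + k t_0$ and $q_r = -2p + r t_0$, and label $\ell_k = \cD(p_k)$ in $\cC_0 := \cD(T_p)$ and $\ell'_r = \cD(q_r)$ in $\cC_1 := \cD(T_{-2p})$. For each of the four possible types of triples of lines, concurrence reduces to an identity on $E$: $\{\ell_i,\ell_j,\ell'_r\}$ is concurrent iff $i+j+r \equiv 0 \pmod n$, matching exactly the non-bases of $M_n$; $\{\ell_i,\ell_j,\ell_k\}$ or $\{\ell_i,\ell'_r,\ell'_s\}$ is concurrent iff $3p \in T_O$; and $\{\ell'_r,\ell'_s,\ell'_t\}$ is concurrent iff $6p \in T_O$. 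The hypothesis $6p \notin T_O$ excludes $3p \in T_O$ as well, ruling out all three spurious types and also forcing the $2n$ lines to be pairwise distinct (the coincidence $\ell_i = \ell'_r$ again requires $3p \in T_O$). Hence $\cC_0 \cup \cC_1$ realizes $M_n$.

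For the equality $\L(\cC_0) = \cC_1$, the inclusion $\cC_1 \subseteq \L(\cC_0)$ follows from counting: $\ell'_r$ meets $\ell_i$ and $\ell_j$ in the common double point $p_{i,j}$ exactly when $i+j \equiv -r \pmod n$ with $i \ne j$, giving $k$ such double points when $n = 2k+1$, and $k$ or $k-1$ (by the parity of $r$) when $n = 2k$, matching the respective definitions of $\L$. For the reverse inclusion, let $L$ be any line carrying the required number of double points of $\cC_0$ and set $Q := \cD(L)$; then $Q$ lies on that many chords $\overline{p_i p_j}$ of $T_p$. If $Q \in E$, the group law gives $Q = -(p_i + p_j) \in T_{-2p}$, so $L = \ell'_r$ for the associated $r$.

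The case $Q \notin E$ is the main obstacle. Consider the degree-$3$ projection $f_Q : E \to \PP^1$ from $Q$; under $3p \notin T_O$, no fiber of $f_Q|_{T_p}$ contains three $T_p$-points (three collinear $T_p$-points would give $3p \in T_O$), so each chord of $T_p$ through $Q$ arises from a size-$2$ fiber. Thus the number $m$ of chords through $Q$ satisfies $2m \le n$, and the third intersections $q_{r_s} = -(p_{i_s} + p_{j_s}) \in T_{-2p}$ associated to distinct $2$-fibers are automatically distinct (distinct fibers yield distinct lines through $Q$). To exclude the extremal case with $Q \notin E$, I would apply a Cayley--Bacharach or residuation argument to the pencil of cubics spanned by $E$ and the degenerate cubic $L_1 \cup \cdots \cup L_m$ of chords through $Q$: the $3m$ intersection points with $E$ are highly constrained by the collinearity identities $p_{i_s} + p_{j_s} + q_{r_s} = O$, and the sharpened hypothesis $6p \notin T_O$ should force the $q_{r_s}$ to coincide, contradicting their distinctness and placing $Q$ back on $E$. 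This elliptic-geometric residuation step is the technical heart of the proof and is where the strengthening from $3p \notin T_O$ to $6p \notin T_O$ enters in an essential way.
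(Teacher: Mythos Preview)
Your verification that $\cD(T_p)\cup\cD(T_{-2p})$ realizes $M_n$ is correct and in fact more thorough than the paper's: you check all four types of triples, whereas the paper treats the $(3,0)$, $(0,3)$ and $(2,1)$ cases explicitly and leaves the $(1,2)$ case (one line of $\cC_0$, two of $\cC_1$) implicit. The inclusion $\cC_1\subseteq\L(\cC_0)$ via the double-point count is also fine.

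The genuine gap is your treatment of the reverse inclusion $\L(\cC_0)\subseteq\cC_1$. The final paragraph is not an argument but a hope. For $n\ge 9$ the number $m$ of chords through $Q$ that must be handled is at least $4$, so $L_1\cup\cdots\cup L_m$ is not a cubic and there is no pencil of cubics to invoke. Even in the small cases where $m=3$, you never say which auxiliary cubic to feed into Cayley--Bacharach, nor why anything would force the third points $q_{r_s}$ to coincide --- you had just argued they are pairwise distinct. And your claim that \emph{this} is where the passage from $3p\notin T_O$ to $6p\notin T_O$ becomes essential is unfounded: you already consumed $6p\notin T_O$ correctly, to exclude concurrences among three lines of $\cC_1$; nothing further is extracted from it later.

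The paper does not attempt this reverse inclusion inside the proof at all. Once $\cC_0\cup\cC_1$ is shown to realize $M_n$, it simply appeals to Section~\ref{subsec:Description-Matro}, where a \emph{labelled} version of $\L$ is set up so that its $r$-th output line is, by construction, the line through the collinear double points $p_{i,j}$ with $i+j+r\equiv 0$. With that reading, $\L(\cC_0)=\cC_1$ is immediate from the matroid realization, and the case $Q\notin E$ never arises. If you insist on the unlabelled statement --- that every line in $\PP^2$ carrying the required number of double points of $\cC_0$ already lies in $\cC_1$ --- you are attempting more than the paper actually proves here, and your residuation sketch does not get you there.
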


\begin{rem}
\begin{enumerate}[a)]
\item The condition $6p\notin T_{O}$ is necessary, as the proof will
show.
\item The line arrangements $\cD(T_{p}),\cD(T_{-2p})$ are labeled by
$T_{O}$. However, choosing any isomorphism $T_{O}\simeq\ZZ/n\ZZ$
(which corresponds to the choice of a generator for $T_{O}$) provides
a labeling by $\ZZ/n\ZZ$. This justifies the claim that
$\cD(T_{p})\cup\cD(T_{-2p})$ is a realization of $M_{n}$ as the ground set of this matroid is $\ZZ/n\ZZ\mathring{\cup}\ZZ/n\ZZ'$.
\item Instead of a smooth cubic curve, one can also consider the complement~$E$ of the node of a nodal cubic. Then $E(\mathbb{K})$ is isomorphic
to $\mathbb{K}^{*}$ and its $n$-torsion points are the $n$-th roots
of unity.
That also leads to realizations of $M_{n}$, see Section
\ref{subsec:The-nodal-cubic}. See also Section \ref{subsec:Other-singular-cubic}
for realizations of $M_{\ell}$ using the cuspidal cubic in characteristic
$\ell$.
\end{enumerate}
\end{rem}

\begin{proof}
For $t\in T_{O}$, let $\ell_{t}$ denote the line dual to the point
$p+t$. Let $t,t',t''$ be three distinct elements of $T_{O}$. Suppose
that the lines $\ell_{t},\ell_{t'},\ell_{t''}$ meet at a common point.
The line dual to that point would then contain the points $p_{t},p_{t'},p_{t''}$,
which implies $p_{t}+p_{t'}+p_{t''}=O$ in $E$. This leads to the
relation $3p=-(t+t'+t'')$. That contradicts the assumption that $3p\notin T_{O}$.
Therefore, the line arrangement $\cD(T_{p})$ contains only double
points. By the same reasoning, $\cD(T_{-2p})$ has also only double
points, since $6p\notin T_{O}$ by assumption.

Let $p_{t,t'}$ denote the intersection point of $\ell_{t}$ and
$\ell_{t'}$; the dual of $p_{t,t}$ is the line $\ell_{t,t'}$, which
contains the points $p_{t},p_{t'}$. This line intersects the cubic
$E$ at a third point, namely the point $-(p_{t}+p_{t'})=-2p-t-t'\in T_{-2p}$,
which does not belong to $T_{p}$ since $3p\notin T_{O}$.

\medskip{}
Fix an element $t_{o}\in T_{O}$ and let $t,t'\in T_{O}$ with $t\neq t'$.
The line $\cD(-2p+t_{o})$ contains the double point $p_{t,t'}$ if
and only if the line $\ell_{t,t'}$ contains the points $-2p+t_{o},p+t$,
and $p+t'$. By the geometry of the cubic curve $E$, this is
equivalent to 
\[
(-2p+t_{o})+(p+t)+(p+t')=O,
\]
which is equivalent to $t+t'+t_{o}=O$. 

\medskip{}
From these descriptions of the line arrangements $\cD(T_{p})\cup\cD(T_{-2p})$,
and by taking a generator $t$ of $T_{O}$, which induces an isomorphism
$\ZZ/n\ZZ\to T_{O},k\mapsto kt$, one obtains that $\cD(T_{p})\cup\cD(T_{-2p})$
is a realization of $M_{n}$. Moreover, from the discussion in Section
\ref{subsec:Description-Matro}, one has that $\L(\cD(T_{p}))=\cD(T_{-2p})$.
\end{proof}
\begin{rem}
The results presented in this and the following subsection require
$n\ge7$, primarily because for $n=5,6$, the $n$-polygon and its
line of symmetries have not enough $k$-points for $k\geq3$.
Separate combinatorial constructions of different operators are described in
Sections \ref{sec:The-pentagon} and \ref{sec:The-hexagon}, which leads to a generalization of the presented results to the cases $n=5,6$.
\end{rem}

For $n=2k+1\geq7$ odd, let $\Psi=\Psi_{\{2\},\{k\}}$; for $n=2k\geq8$
even, let $\Psi=\Psi_{\{2\},\{k-1,k\}}$. Since by Theorem \ref{thm:T-mois-deux},
one has $\Psi(T_{p})=T_{-2p},$ the operator $\Psi$ provides a geometric
method to compute the multiplication by $-2$ (and its powers) on
a point $p$ of an elliptic curve $E$ without requiring the computation
of a tangent to the curve, or even the knowledge of its equation.
However, this comes at the price of needing to know the points in
$T_{p}$. Naturally, the curve $E$ can be reconstructed from the
knowledge of $T_{p}$ by determining the unique cubic curve passing
through $T_{p}\cup\Psi(T_{p})$.

For the cases $n=5$ and $n=6$, developed in the Sections \ref{sec:The-pentagon},
\ref{sec:The-hexagon} and in \cite{KR2} for $n=7$ and $n=8$, the
labelled point arrangements $T_{p}$ are constructed without requiring
knowledge of the curve $E$ containing $p$.

\subsection{\label{subsec:Converse} Realization spaces $\protect\cR_{n}$ and
the modular surfaces $\Xi_{1}(n)$}

In this section, we work over an algebraically closed field $\mathbb{K}$ of characteristic $0$ (see also Remark \ref{PosCar} for the positive characteristic). 

Consider the map which to a triple $(E,t,p)$ -- where $E$ is an elliptic
curve, $t$ is a generator of a cyclic group of order $n$ and $p$
a point on $E$ -- associates the labeled point arrangement $(p+kt)_{k\in\ZZ/n\ZZ}$,
considered up to projective transformations. 
\begin{prop}
\label{prop:injXitoRn}Suppose $n\geq7$.
The map $(E,t,p)\mapsto \cD(p+kt)_{k\in\ZZ/n\ZZ}\cup \cD(-2p+kt)_{k\in\ZZ/n\ZZ} $ defines a rational map
\[
\G:\Xi_{1}(n)\to\cR_{n/\mathbb{K}}
\]
which is generically nine-to-one onto its image. The fiber over the
 line arrangement $\Gamma(E,t,p)$ consists of the nine points $(E,t,p+t_{3})$,
where $t_{3}$ is in $E[3]$, the set of $3$-torsion points of $E$.
\\
The map $\G$ induces a birational map between $\Xi_{1}(n)$ and the
image of $\G$ in $\cR_{n/\mathbb{K}}$. 
\end{prop}

\begin{rem}
In Section \ref{subsec:The-nodal-cubic}, we discuss the case of the
nodal cubic curve, so that the genericity assumption in Proposition
\ref{prop:injXitoRn} can be made more precise as follows: The map
$\G$ is well-defined for any point $(E,t,p)$ in $\Xi_{1}(n)$, where
$E$ is either a smooth or a nodal cubic, and $p$ is not a $6n$-torsion
point, allowing one to apply Theorem \ref{thm:T-mois-deux}. 
\end{rem}

\begin{proof}[Proof of Proposition \ref{prop:injXitoRn}]
Let $x=\Gamma(E,t,p)$ be
a point of the image of $\Gamma$, where $E$ is smooth and $6p\not\notin T_{O}=\left\langle t\right\rangle $.
Let $(p+kt)_{k\in\ZZ/n\ZZ}$ be the corresponding point arrangement in $\PP^{2}$
(it is well defined up to projective transformation). 
By Theorem~\ref{thm:T-mois-deux}, the line arrangement $x$ is a realization of the matroid~$M_n$, hence $x\in \cR_n$.

Let us now prove that the map $\Gamma$ is indeed a rational map, i.e., the map is algebraic.
The map $\Gamma$  is defined on the dense subset of $\Xi_{1}(n)$ parametrized by the triples $(E,p,t)$ as above.
Since $E$ is a elliptic curve in $\PP^2_\mathbb{K}$, we can assume it is defined by the Weiserstrass equation 
\begin{equation}\label{eq:weierstrass}
	y^2 = x^3+ax+b,
\end{equation}
with parameters $a,b \in \mathbb{K}$ such that $4a^3+27b^2 \neq 0$.
The point $p$ is a general point $(x_1,y_1)\in \mathbb{K}^2$ satisfying the Equation~\eqref{eq:weierstrass}.
Finally, $t$ is an $n$-torsion point $(x_2,y_2)\in \mathbb{K}^2$ satisfying the Equation~\eqref{eq:weierstrass} and the $n$-th division polynomial of the elliptic curve (which defines the $n$-torsion points on $E$ and also depends on $a$ and $b$).
Given these parameters we can obtain the matrix defining the realization $\Gamma(E,p,t)$ as a polynomial map in $x_1,x_2,y_1,y_2$.
The above argument shows that this matrix is indeed in $\cR_n$ and the map $\Gamma$ is hence algebraic.

\medskip
We denote by $\mathcal{U}(x)$ the union of the points
$(p+kt)_{k\in\ZZ/n\ZZ}$ and $(-2p+kt)_{k\in\ZZ/n\ZZ}$, the latter is the the image of the former arrangement under the operator $\Psi$.
By Theorem \ref{Thm:ExiststUniCubCur},
there is a unique cubic curve passing through the points $\mathcal{U}(x)$. 
This cubic curve is isomorphic to $E$, and thus we identify it with
$E$. Since $\mathcal{U}(x)$ contains at least $10$ points, and
by B\'ezout's Theorem two cubic curves meet in at most $9$ distinct
points, $E$ is the unique cubic curve containing~$\mathcal{U}(x)$. 

For $p,q$ in $E$, suppose that there exists a projective transformation
$\g$ of the plane that maps the labeled point arrangement $(p+kt)_{k\in\ZZ/n\ZZ}$
to $(q+kt)_{k\in\ZZ/n\ZZ}$. Necessarily, since two distinct cubic
curves meet in at most $9$ points, $\g$ must induce an automorphism
of the projective curve $E$. 

Suppose that $E$ has $j$-invariant is different from
$0$ and $1728$. Then the group of projective transformations of
$\PP^{2}$ preserving $E$ has order $18$ and is generated by the
maps inducing the multiplication map $[-1]$ and the translations
by order $3$ torsion elements (see e.g. \cite{BM}). Therefore, the
point $q$ must be in the orbit of $p$ under that group of order $18$.
If $\tau$ is the projective transformation inducing the translation
by a $3$-torsion element $t_{3}$, the point configuration $\G(E,t,p)$
is projectively equivalent to 
\[
\tau((p+kt)_{k\in\ZZ/n\ZZ}\cup (-2p+kt)_{k\in\ZZ/n\ZZ})=\G(E,t,p+t_{3}).
\]
The point configuration $(p+kt)_{k\in\ZZ/n\ZZ}$ is projectively equivalent
to $(-p-kt)_{k\in\ZZ/n\ZZ}$, but since $n>4$, $\Gamma(E,t,p)$ is not projectively
equivalent to $(-p+kt)_{k\in\ZZ/n\ZZ}\cup (2p+kt)_{k\in\ZZ/n\ZZ}=\G(E,t,-p)$. 

Note that if $E$ has $j$-invariant $0$ or $j=1728$, then the curve
$E$ has complex multiplication by $\mu_{3}$ or $\mu_{4}$, respectively,
where $\mu_{k}$ denotes the complex $k$-th roots of unity. By \cite{BM}
Corollary 3.10, the extra projective transformations of the plane
induce, by restriction, the automorphisms $[\zeta],\,\zeta\in\mu_{k}$,
where $[\zeta]$ is the multiplication by $\zeta$ on $E$. However,
as in the case of the automorphism $[-1]$, the point configuration
$(p+kt)_{k\in\ZZ/n\ZZ}$ is projectively equivalent to $(\zeta p+k\zeta t)_{k\in\ZZ/n\ZZ}$
but not to $(\zeta p+kt)_{k\in\ZZ/n\ZZ}$ for primitive
$\zeta\in\mu_{k}$.

Let $\mathcal{E}[3]$ be the group of $3$-torsion sections acting
on $\Xi_{1}(n)$. An element of $\mathcal{E}[3]$ acts on the generic
element $(E,t,p)$ by the translation by a $3$-torsion point $t_{3}$
of $E$: $(E,t,p)\to(E,t,p+t_{3})$. From the above discussion, the
map $\G$ satisfies $\G(\tau(x))=\G(x)$ for a generic point $x$
and $\tau\in\mathcal{E}[3]$. Thus the degree $9$ map $\G$ factors
through the degree $9$ quotient map $\pi:\Xi_{1}(n)\to\Xi_{1}(n)/\mathcal{E}[3]$. 

Consider the multiplication by $3$ map  $[3]:\Xi_1(n)\to\Xi_1(n)$.
This is a degree $9$ rational map, which we claim has the same fibers as $\pi$:
The fiber of $(E,t,3p)$ under the map $[3]$ is the set $\{(E,t,p'):3p'=3p\}$ which is the same as $\{(E,t,p+t_3):t_3\in E[3]\}$.
The latter set is the same as the fiber of $(E,t,p)$ under the map $\pi$ as claimed.
Therefore  $\Xi_{1}(n)/\mathcal{E}[3]$ is birational to 
$\Xi_{1}(n)$, and thus there is a birational map from $\Xi_{1}(n)$ 
to the image of $\G$
in $\cR_{n}$. 
\end{proof}
\begin{rem}
In an earlier version of this paper, we incorrectly asserted that
the map $\G$ was one-to-one onto its image. We are grateful to Pierre
Deligne for pointing out this mistake.
\end{rem}

\begin{rem} \label{PosCar}
The result of Proposition \ref{prop:injXitoRn} should hold true
also in positive characteristic $\ell>3$. Indeed the results of \cite{BM}
that we use in the proof generalize 
in characteristic $\ell>3$,
see e.g. the MathOverFlow discussion number 484168. 
There are issues in characteristic $3$, where $E[3]$ assumes a non-reduced scheme structure. 
Also, formally, the argument building on Equation \eqref{eq:weierstrass} does not work in characteristics $2$ and $3$. 
Finally, one can check that in positive characteristic, the notion of complex multiplication  by a complex root zeta on $E$ is well defined. 
\end{rem}

From Proposition \ref{prop:injXitoRn} and Theorem \ref{Thm:ExiststUniCubCur},
we derive the following result, which implies both Theorem~\ref{thm:MAIN1}
and~\ref{thm:Main-X}. 
\begin{thm}
The realization space $\cR_{n}$ is birational to the modular elliptic
surface $\Xi_{1}(n)$. In particular it is irreducible. 
\end{thm}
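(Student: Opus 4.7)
The plan is to combine Proposition~\ref{prop:injXitoRn} with Theorem~\ref{Thm:ExiststUniCubCur} to produce a rational inverse to $\Gamma:\Xi_1(n)\to\cR_n$, thereby showing that $\Gamma$ is dominant. Proposition~\ref{prop:injXitoRn} already establishes that $\Gamma$ is birational onto its image and factors through a $9$-fold quotient by the $3$-torsion sections $\mathcal{E}[3]$, with $\Xi_1(n)/\mathcal{E}[3]\simeq\Xi_1(n)$ via multiplication by $3$. So the only remaining content is that a generic realization of $M_n$ in fact arises from some triple $(E,t,p)$.

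I take a generic realization $\cP=(p_i)_{i\in\ZZ/n\ZZ}\cup(q_r)_{r\in\ZZ/n\ZZ'}$ of $M_n$. By Theorem~\ref{Thm:ExiststUniCubCur}, there is a unique cubic $\gamma_{\cP}\subset\PP^2$ passing through $\cP$. Smoothness of $\gamma_{\cP}$ is a Zariski-open condition on $\cR_n$, and it is nonempty because Theorem~\ref{thm:T-mois-deux} produces realizations on smooth cubics. Restricting to this open locus, I set $E:=\gamma_{\cP}$ and fix an inflection point of $E$ as the origin of its group law. The non-bases of $M_n$ then translate into additive relations on $E$: for distinct $i,j\in\ZZ/n\ZZ$ and $r=-(i+j)$, collinearity of $p_i,p_j,q_r$ reads $p_i+p_j+q_r=O$. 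Specializing $i=0$ gives $q_r=-p_0-p_{-r}$, and substituting back into the general relation yields $p_{i+j}=p_i+p_j-p_0$. Writing $p:=p_0$ and $t:=p_1-p_0$, an immediate induction gives $p_j=p+jt$ and $q_r=-2p+rt$ for all indices. The $p_j$ being pairwise distinct forces $t$ to have exact order $n$, so $\cP=\Gamma(E,t,p)$ once the isomorphism $\langle t\rangle\simeq\ZZ/n\ZZ$ is matched with the labeling of $\cP$.

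Composing this rational inverse with the birational equivalence $\Xi_1(n)/\mathcal{E}[3]\simeq\Xi_1(n)$ from Proposition~\ref{prop:injXitoRn} gives the desired birational map $\cR_n\to\Xi_1(n)$, and irreducibility of $\cR_n$ then follows from irreducibility of $\Xi_1(n)$. The main point requiring care is ensuring that the smooth-cubic open locus is dense in \emph{every} irreducible component of $\cR_n$, so that no spurious component of the realization space escapes the construction. I would handle this by observing that the assignment $\cP\mapsto\gamma_{\cP}$ is algebraic (the cubic is cut out by a linear system in the coefficients of $\cP$, uniquely solvable by Theorem~\ref{Thm:ExiststUniCubCur}), so smoothness defines a Zariski open; any hypothetical component of $\cR_n$ whose generic realization lay on a singular cubic consists of degenerations already captured by the analogous nodal and cuspidal constructions discussed in Section~\ref{subsec:The-nodal-cubic}, and hence lies in the closure of $\mathrm{Im}(\Gamma)$ rather than contributing a new component.
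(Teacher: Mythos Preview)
Your approach is correct and matches the paper's two-line proof, which simply asserts that Theorem~\ref{Thm:ExiststUniCubCur} makes the image of $\Gamma$ dense in $\cR_n$ and then invokes Proposition~\ref{prop:injXitoRn}. You actually supply the step the paper omits---namely, deducing from the collinearity relations on the cubic that $p_j=p+jt$ and $q_r=-2p+rt$ with $t$ of exact order $n$---so your argument is more complete; the only place to tighten is the final paragraph, where instead of arguing that a hypothetical singular-cubic component lies in $\overline{\mathrm{Im}(\Gamma)}$, you can note that your arithmetic-progression derivation works verbatim on the smooth locus of a nodal cubic (and Section~\ref{subsec:Other-singular-cubic} rules out all other singular cubics in characteristic~$0$), so $\Gamma$ extended to the nodal fibers is genuinely surjective and irreducibility of $\cR_n$ follows at once from that of $\Xi_1(n)$.
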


\begin{proof}
Theorem~\ref{Thm:ExiststUniCubCur} yields that the image of
$\G$ contains a dense subset of~$\cR_{n}$. By Proposition~\ref{prop:injXitoRn}
that dense subset is birational to $\Xi_{1}(n)$. 
\end{proof}
In the next Section, we examine which singular cubic curves can provide
realizations of $M_{n}$. 

\section{\label{sec:Further-constructions-and}Further constructions and results}

In this section we collect several related constructions on which
the operators $\L$ act. 

\subsection{\label{subsec:The-nodal-cubic}The nodal cubic curve}

If $E$ is a nodal cubic with node $s$, Theorem~\ref{thm:T-mois-deux}
holds true modulo the following adjustments: 

Define $E'=E\setminus\{s\}$. The choice of an inflection point $O$
of $E'$ gives a group structure on $E'$ that is (isomorphic to)
the multiplicative group $\mathbb{G}_{m}$, and such that the points
corresponding to $a,b,c$ are on a line if and only if $abc=1$ (see
e.g. \cite[Chapter II, Proposition 2.5]{Silverman}). The torsion
elements of $E'$ are the $n$-th roots of unity, and Theorem \ref{thm:T-mois-deux}
is true for $E'$, with the proof following the same steps after transitioning
from the additive to the multiplicative notation for the group law
on $E$. 

For example, one may choose $E:\{y^{2}z=x^{3}+x^{2}z\}$, which is
singular at $(0:0:1)$. Furthermore, let $(0:1:0)$ be the neutral
element and fix the isomorphism $\g:\mathbb{G}_{m}\to E'$ defined
by 
\[
\g(t)=(4t^{2}-4t\,:\,4t^{2}+4t\,:\,(t-1)^{3})
\]
with the inverse map given by 
\[
(x:y:z)\to(2x^{2}+2xy+y^{2}+2xz+2yz)/y^{2}.
\]
Let $\UU_{n}$ be the group of $n$-th roots of unity. For $t\in\GG_{m}$
such that $t^{6m}\neq1$, define $\cC_{0}=\cD(\g(t\ze))_{\ze\in\UU_{n}}$
and $\cC_{1}=\L(\cC_{0})$. From the above discussion: 
\begin{prop}
\label{prop:nodal-case}The line arrangement $\cC_{0}\cup\cC_{1}$
is a realization of $\cR_{n}$. 
\end{prop}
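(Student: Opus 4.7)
The plan is to deduce this directly from Theorem~\ref{thm:T-mois-deux} by transferring the proof from the additive setting of a smooth elliptic curve to the multiplicative setting $E' \simeq \mathbb{G}_m$. The only inputs used in the proof of Theorem~\ref{thm:T-mois-deux} are (i) the group law on the smooth locus, (ii) the collinearity criterion \emph{three smooth points $P_{1},P_{2},P_{3}$ are collinear in $\PP^{2}$ iff $P_{1}+P_{2}+P_{3}=O$}, and (iii) the existence of a cyclic subgroup of order $n$. All three persist for the nodal cubic $E$ after switching additive to multiplicative notation, so the argument should port over line by line.

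Concretely, I would set $p=\gamma(t)$ and $T_{O}=\gamma(\UU_{n})$, the cyclic $n$-torsion subgroup of $E'$, so that $\cC_{0}=\cD(T_{p})$ with $T_{p}=(\gamma(t\zeta))_{\zeta\in\UU_{n}}$. The collinearity criterion on $E'$ reads: $\gamma(a),\gamma(b),\gamma(c)$ are collinear iff $abc=1$. Mirroring the proof of Theorem~\ref{thm:T-mois-deux}, I would first check that $\cC_{0}$ has only double points: three lines concur iff three points of $T_{p}$ are collinear, which would force $(t\zeta)(t\zeta')(t\zeta'')=1$ for distinct $\zeta,\zeta',\zeta''\in\UU_{n}$, i.e.\ $t^{3}\in\UU_{n}$; this is excluded by the hypothesis $t^{6n}\neq 1$ (which is the multiplicative translation of $6p\notin T_{O}$ and implies $3p\notin T_{O}$). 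The same argument applied to $T_{p^{-2}}=(\gamma(t^{-2}\zeta))_{\zeta\in\UU_{n}}$ uses the full strength $t^{6n}\neq 1$.

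Next, the third intersection of the line through $\gamma(t\zeta)$ and $\gamma(t\zeta')$ with $E$ lies in $E'$ (since the only singular point of $E$ is excluded by a dimension count) and equals $\gamma(t^{-2}\zeta^{-1}\zeta'^{-1})\in T_{p^{-2}}$. Consequently the triple-point incidences between $\cC_{0}$ and $\L(\cC_{0})=\cD(T_{p^{-2}})$ translate into the relation $(t\zeta)(t\zeta')(t^{-2}\zeta_{0}^{-1})=1$, i.e.\ $\zeta\zeta'=\zeta_{0}$; fixing a generator $\zeta_{1}\in\UU_{n}$ and writing $\zeta=\zeta_{1}^{i},\zeta'=\zeta_{1}^{j},\zeta_{0}=\zeta_{1}^{-r}$, this is exactly the defining relation $i+j+r=0$ of the non-bases of $M_{n}$. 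The generator $\zeta_{1}$ thus induces the required labelling, and $\cC_{0}\cup\cC_{1}$ becomes a realization of $M_{n}$.

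The only real obstacle is the bookkeeping of verifying that the single hypothesis $t^{6n}\neq 1$ suffices to simultaneously rule out every coincidence appearing in Theorem~\ref{thm:T-mois-deux} (both $3p\notin T_{O}$ and the symmetric condition on $T_{p^{-2}}$), and to separate the smooth points of $E$ from the node. Once these checks are made, the statement follows with no additional geometric input.
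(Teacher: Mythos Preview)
Your proposal is correct and matches the paper's own argument: the paper simply asserts that Theorem~\ref{thm:T-mois-deux} holds for the nodal cubic ``with the proof following the same steps after transitioning from the additive to the multiplicative notation,'' and you have carried out exactly those steps. One small remark: your justification that the third intersection point lies in $E'$ via a ``dimension count'' is phrased oddly---the cleaner reason is that a line through the node meets $E$ with multiplicity $2$ there, hence can contain at most one further smooth point, so a secant through two distinct smooth points automatically avoids the node---but this does not affect the validity of the argument.
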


That yields explicit realizations of the line arrangements in $\cR_{n}$
for all $n\ge7$.
Note that the explicit realizations of $\cR_{n}$ using
an elliptic curve $E$ may be difficult to obtain for large $n$,
since it is usually difficult to construct the group of $n$-torsion
points of $E$. 
\begin{rem}
A consequence of Proposition \ref{prop:nodal-case} is that the rational
map $\G:\Xi_{1}(n)\to\cR_{n}$ defined in Proposition \ref{prop:injXitoRn}
extends to the nodal fibers of the fibration $\Xi_{1}(n)\to X_{1}(n)$.
The different nodal fibers correspond to the choice of an isomorphism
$\UU_{n}\simeq\ZZ/n\ZZ$, i.e., of the choice of a generator of $\UU_{n}$. 
\end{rem}

\subsection{\label{subsec:Other-singular-cubic}Other singular cubic curves}

In this section we work over an algebraically closed field of characteristic $0$ or $p$ with $p>3$.  The non-nodal singular reduced cubic curves $C$ are:
\begin{enumerate}
\item The cuspidal cubic.
\item The union of a line and a conic in general position.
\item The union of a conic and a tangent to one point of the conic.
\item Three lines in general position.
\item Three lines meeting at the same point.
\end{enumerate}
For each of theses cases, let $C^{\#}$ be the complement of the singular
points. According to results attributed to N\'eron, which also appear in Tate's algorithm paper,
 the curve $C^{\#}$ is isomorphic, respectively,
to the group
\[
\mathbb{G}_a,\GG_{m}\times\ZZ/2\ZZ,\mathbb{G}_a\times\ZZ/2\ZZ,\GG_{m}\times\ZZ/3\ZZ,\mathbb{G}_a\times\ZZ/3\ZZ.
\]
Moreover, the neutral element for $C^{\#}$ and the above-mentioned isomorphism can be chosen such
that for three points not all on a line contained in $C$, their sum
(or product, according to the case) is the neutral element if and
only if these three points are on a line. 

Suppose $n>7$. If one of the components of $C^{\#}$ is a line (intersected
with $C^{\#}$), the set $T_{p}=(p+t)_{t\in T_{O}}$ contains at least
$1/3$ of its elements on that line. Dually, this produces a point
of multiplicity $\geq3$ on $\cD(T_{p})$, making it impossible to
obtain a realization of $M_{n}$ in this manner, as $\cD(T_{p})$
must have only double points. 

For the case of the cuspidal cubic, if the characteristic is $0$,
there are no non-trivial torsion elements on $C^{\#}\simeq\mathbb{G}_a$.
If the characteristic is $\ell>0$, every point of $C^{\#}$ is $\ell$-torsion.
Therefore,  if $T_{O}$ is a cyclic group of $C^{\#}$ of order $n\geq 7$, then $n=\ell$. 
In that case, Theorem \ref{thm:T-mois-deux}
holds for $E=C^{\#}$ (with the condition on $p$ reduced to $p\notin T_{O}$),
and the proof follows the same steps, yielding realizations of $M_{\ell}$.

\subsection{Periodic line arrangements }

In this subsection, we describe periodic line arrangements in~$\cR_{n}$
under the associated operator $\Psi$.

For an integer $n=2k+1\geq7$ (resp.\ $n=2k\geq6$), let us denote
by $\Psi$ the operator $\Psi_{\{2\},\{k\}}$ (resp.\ $\Psi_{\{2\},\{k-1,k\}}$)
and define $\L=\cD\circ\Psi\circ\cD$, the corresponding operator
acting on line arrangements. We recall that the subscript $^{u}$
means unlabeled.

As before, let $E$ be a smooth cubic curve with inflection point
$O$, and let $T_{O}$ be a cyclic subgroup of order $n$. Let $T_{p}\subseteq E$
be a subset of points in torsion progression: $T_{p}=\{p+t\,|\,t\in T_{O}\}$.

The operator $\Psi^{u}$ sends $T_{p}$ to $T_{-2p}$, therefore the
point arrangement $T_{p}$ is $\Psi^{u}$-periodic of period~$m$
if and only if $T_{p}=T_{(-2)^{m}p}$ and $T_{p}\neq T_{(-2)^{d}p}$
for $d<m$. If $T_{p}=T_{(-2)^{m}p}$, then $\exists t\in T_{O}$,
$p+t=(-2)^{n}p$ and $((-2)^{n}-1)p=t\in T_{O}$, in particular $p$
is a torsion element.

Let $p\in E[r]$ be an $r$-torsion point such that $\left\langle p\right\rangle \cap T_{O}=\{O\}$
(such a point always exists since~$T_{O}$ is cyclic and $E[r]\simeq(\ZZ/r\ZZ)^{2}$
for a complex elliptic curve). Since $\left\langle p\right\rangle \cap T_{O}=\{O\}$,
the relation $ap=t\in T_{O}$ for an integer $a$ yields $t=O$, therefore
the point arrangement $T_{p}$ is $\Psi^{u}$-periodic of period $m(r)$,
where $m(r)$ is the order of $-2$ in $(\ZZ/r\ZZ)^{*}$.

One observes that for an integer $m>2$, the element $-2$ has order
$m$ in $(\ZZ/(2^{m}-(-1)^{m})\ZZ)^{*}$. Thus, for any period $m>2$,
there exist line arrangements in $\cR_{n}$ ($n\geq7$) that are $\L^{u}$-periodic
of period $m$. 

If $(-2)^{m}-1=0\mod r,$ then $r$ divides $2^{m}-(-1)^{m}$, thus
once an elliptic curve $E$ is fixed, there is a finite number of
$m$-periodic arrangements. One may obtain periodic line arrangements
with the same period, but coming from torsion points of distinct order.
For example, $2^{12}-1=3^{2}\cdot5\cdot7\cdot13$ and the $16$ integers
$r$ such that $-2$ has order $12$ in $(\ZZ/r\ZZ)^{*}$ are 
\[
13,35,39,45,65,91,105,117,195,273,315,455,585,819,1365,4095.
\]
To each integer $r$ in that list, one may associate a line arrangement
which is $12$-periodic for the action of $\L$. Table~\ref{fig:periods}
shows for a period $k$ the number $N(k)$ of integers $r$ such that
$-2$ has period exactly $k$ in $(\ZZ/r\ZZ)^{*}$ together with the
lowest possible such number $r$. The example above is the column
with period $12$ in that table for which we listed the $16$ possible
choices for $r$.
\begin{center}
\begin{table}[hbt]
\begin{tabular}{lcccccccccccccc}
\toprule 
Period $k$  & $3$  & $4$  & $5$  & $6$  & $7$  & $8$  & $9$  & $10$  & $11$  & $12$  & $13$  & $22$  & $28$  & $60$\tabularnewline
\midrule 
$N(k)$  & $1$  & $2$  & $2$  & $3$  & $2$  & $4$  & $5$  & $4$  & $2$  & $16$  & $2$  & $12$  & $54$  & $4456$\tabularnewline
Lowest $r$  & $9$  & $5$  & $11$  & $7$  & $43$  & $17$  & $19$  & $31$  & $683$  & $13$  & $2731$  & $23$  & $29$  & $61$\tabularnewline
\bottomrule &  &  &  &  &  &  &  &  &  &  &  &  &  & \tabularnewline
\end{tabular}\caption{The choices of $r$ for various periods $k$ as explained above.}
\label{fig:periods} 
\end{table}
\par\end{center}

Using a low $r$ (and therefore a torsion sub-group with few elements)
forces the union of the line arrangements to have many triple points;
in case $n=7$ and $r=13$, the union is a line arrangement of $84$
lines with $1036$ triple points and $378$ double points. If we use
real torsion points, that is a real line arrangement. Note that by
\cite[Theorem 1.3]{GT}, the upper-bound on the number of triple points
on an arrangement of 84 real lines is $1135.$ Over any field, the
Sch\"onheim upper-bound for $84$ lines is $1148$ triple points.

\section{\label{sec:The-pentagon}The pentagon, the operator $\Lambda_{\{2\}}^{0}$
and the pentagram map}

Let us denote by $\L_{\{2\}}$ the operator $\L_{\{2\},\{2\}}$. Let
$\cC_{0}=(\ell_{1},\dots,\ell_{5})$ be a pentagon: a labelled arrangement
of $5$ lines. For $n=1,\dots,5$, each line arrangement $\Lambda_{\{2\}}(\sum_{i\neq n}\ell_{i})$
is the union of three lines and the line arrangement $\Lambda_{\{2\}}(\cC_{0})$
is the union of these $15$ lines, thus $\Lambda_{\{2\}}$ cannot
act as a self map on some realization space of line arrangement with
five lines.

Instead of using $\Lambda_{\{2\}}$, let us define combinatorially
(using Figure \ref{fig:Pentagon}) three operators $\L_{\{2\}}^{\pm},\L_{\{2\}}^{0}$
acting on labeled line arrangements of $5$ lines. These operators
are such that $\Lambda_{\{2\},\{2\}}(\cC_{0})$ is the disjoint union
of $\Lambda_{\{2\}}^{\pm}(\cC_{0})$ and $\Lambda_{\{2\}}^{0}(\cC_{0})$.

The first operator, denoted by $\L_{\{2\}}^{0}$ extends the operators
$\L_{\{2\},\{k\}}$, $k\geq3$ to the case of $5$ lines in the following
way: For a labeled pentagon $\cC_{0}=(\ell_{1},\dots,\ell_{5})$,
the labeled pentagon $\L_{\{2\}}^{0}(\cC_{0})=(\ell_{1}',\dots,\ell_{5}')$
is defined by 
\[
\begin{array}{c}
\ell_{1}'=\overline{p_{3,4}p_{2,5}},\,\ell_{2}'=\overline{p_{1,3}p_{4,5}},\,\ell_{3}'=\overline{p_{1,5}p_{2,4}},\,\ell_{4}'=\overline{p_{1,2}p_{3,5}},\,\ell_{5}'=\overline{p_{1,4}p_{2,3}},\end{array}
\]
where $\overline{pq}$ is the unique line through points $p\neq q$,
and $p_{i,j}$ is the intersection point of the lines $\ell_{i}$and
$\ell_{j}$. In the above equalities $\ell_{j}'=\overline{p_{r,s},p_{t,u}}$,
the indices are such that $\{j,r,s,t,u\}=\{1,\dots,5\}$; each of
the $10$ points $p_{i,j}$ is on a unique line $\ell_{t}'$. Let
us consider the indices as elements of $\ZZ/5\ZZ$. Then the triples
$\ell_{j}',p_{r,s},p_{t,u}$ also verify the relation 
\[
r+s=t+u=2j\,\mod5.
\]

Let us define the operator $\L_{\{2\}}^{+}$ which associates to $\cC_{0}$
the lines $\ell_{1}''$,...,$\ell_{5}''$ defined by 
\[
\begin{array}{c}
\ell_{1}''=\overline{p_{2,3}p_{4,5}},\,\ell_{2}''=\overline{p_{1,5}p_{3,4}},\,\ell_{3}''=\overline{p_{1,2}p_{4,5}},\,\ell_{4}''=\overline{p_{1,5}p_{2,3}},\,\ell_{5}''=\overline{p_{1,2}p_{3,4}}.\end{array}
\]
Moreover, let us define the operator $\L_{\{2\}}^{-}$ which associates
to $\cC_{0}$ the lines $\ell_{1}'''$,...,$\ell_{5}'''$ defined
by 
\[
\begin{array}{c}
\ell_{1}'''=\overline{p_{2,4}p_{3,5}},\,\ell_{2}'''=\overline{p_{1,4}p_{3,5}},\,\ell_{3}'''=\overline{p_{1,4}p_{2,5}},\,\ell_{4}'''=\overline{p_{1,3}p_{2,5}},\,\ell_{5}'''=\overline{p_{1,3}p_{2,4}}.\end{array}
\]
In both cases, the indices of $\ell_{j}'',p_{r,s},p_{t,u}$ (resp.\ $\ell_{j}''',p_{r,s},p_{t,u}$)
are such that $\{j,r,s,t,u\}=\{1,\dots,5\}$ and the following relation
holds: 
\[
r+s=t+u=-j\,\mod5.
\]
\begin{figure}[h]
\begin{centering}
\begin{tikzpicture}[scale=0.35]
\clip(-4.035343653759073,-11.277397700687427) rectangle (20.08514341589862,8.977421904826393);
\draw [line width=0.3mm,domain=-4.035343653759073:20.08514341589862] plot(\x,{(--10.6176-2.82*\x)/4.08});
\draw [line width=0.3mm,domain=-4.035343653759073:20.08514341589862] plot(\x,{(-27.6024--1.8*\x)/3.38});
\draw [line width=0.3mm,domain=-4.035343653759073:20.08514341589862] plot(\x,{(-37.2204--3.18*\x)/-0.9});
\draw [line width=0.3mm,domain=-4.035343653759073:20.08514341589862] plot(\x,{(-6.222-0.1*\x)/-4.9});
\draw [line width=0.3mm,domain=-4.035343653759073:20.08514341589862] plot(\x,{(--10.8188-2.06*\x)/-1.66});
\draw [line width=0.3mm,color=blue,domain=-4.035343653759073:20.08514341589862] plot(\x,{(--87.04539596231493-9.406177658142665*\x)/-1.2273082099596202});
\draw [line width=0.3mm,color=blue,domain=-4.035343653759073:20.08514341589862] plot(\x,{(--19.92886630511672-2.3058193822211748*\x)/-13.70514972883754});
\draw [line width=0.3mm,color=blue,domain=-4.035343653759073:20.08514341589862] plot(\x,{(-62.614041393583975--8.222042083477058*\x)/-7.255294929285962});
\draw [line width=0.3mm,color=blue,domain=-4.035343653759073:20.08514341589862] plot(\x,{(-102.60840120760801--10.906034014290036*\x)/13.607774982389056});
\draw [line width=0.3mm,color=blue,domain=-4.035343653759073:20.08514341589862] plot(\x,{(--19.07767541121889-2.9880134964150145*\x)/10.307338675664276});
\begin{scriptsize}
\draw [fill=black] (6.38,1.4) circle (1.5mm);
\draw[color=black] (6.794914778842196,1.9968571038724976) node {$p_{45}$};
\draw [fill=black] (4.72,-0.66) circle (1.5mm);
\draw[color=black] (5,-1.5) node {$p_{15}$};
\draw [fill=black] (8.8,-3.48) circle (1.5mm);
\draw[color=black] (9.222937318304428,-2.891135640044875) node {$p_{12}$};
\draw [fill=black] (12.18,-1.68) circle (1.5mm);
\draw[color=black] (12.609389807554383,-1.1020664004411311) node {$p_{23}$};
\draw [fill=black] (11.28,1.5) circle (1.5mm);
\draw[color=black] (11.682907522759583,2.0927000988512696) node {$p_{34}$};
\draw[color=black] (-3.588076343858135,4.536042283221506) node {$\ell_1$};
\draw[color=black] (-3.588076343858135,-9.500145288558917) node {$\ell_2$};
\draw[color=black] (8.903460668375187,8.48223309743607) node {$\ell_3$};
\draw[color=black] (-3.28076343858135,0.6) node {$\ell_4$};
\draw[color=black] (12.9,8.48223309743607) node {$\ell_5$};
\draw [fill=black] (10.027308209959621,5.926177658142665) circle (1.5mm);
\draw[color=black] (11.25,6.) node {$p_{35}$};
\draw[color=blue] (8.7,-10) node {$\ell_4'$};
\draw [fill=black] (18.42514972883754,1.6458193822211746) circle (1.5mm);
\draw[color=black] (18.43918448117458,2.1885430938300416) node {$p_{24}$};
\draw[color=blue] (-3.4922333488793624,-1.5493337103420672) node {$\ell_3'$};
\draw [fill=black] (13.635294929285962,-6.822042083477059) circle (1.5mm);
\draw[color=black] (14.547034732235966,-6.309535794287744) node {$p_{13}$};
\draw[color=blue] (0.8,8.48223309743607) node {$\ell_2'$};
\draw [fill=black] (-2.327774982389058,-9.406034014290036) circle (1.5mm);
\draw[color=black] (-1.2,-9.5) node {$p_{25}$};
\draw[color=blue] (19,7) node {$\ell_1'$};
\draw [fill=black] (1.8726613243357233,1.3080134964150145) circle (1.5mm);
\draw[color=black] (2.290294014839899,1.8371187789078776) node {$p_{14}$};
\draw[color=blue] (-3.4922333488793624,2.24) node {$\ell_5'$};
\end{scriptsize}
\end{tikzpicture}
\par\end{centering}
\caption{\label{fig:Pentagon}A pentagon arrangement and its image by $\protect\L_{\{2\}}^{0}$
in blue.}
\end{figure}
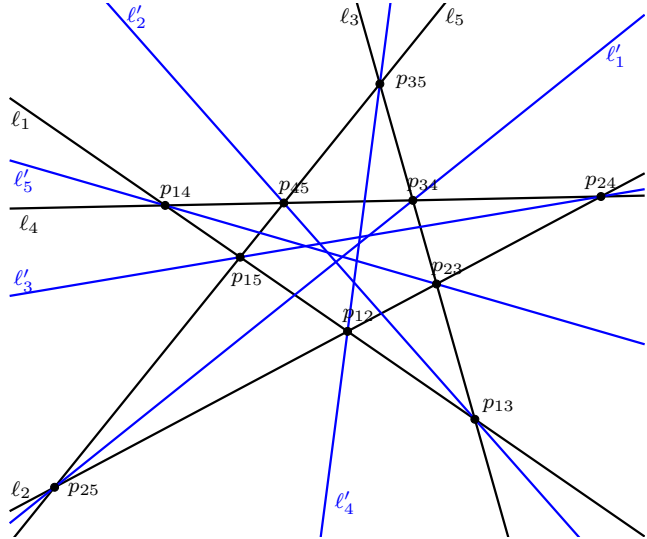

The pentagram map $\text{P}$ acts on (generic) line arrangements
$\cC=(\ell_{1},\dots,\ell_{n})$ labeled by $\ZZ/n\ZZ$, by sending
$\cC$ to the line arrangement $\text{P}(\cC)=(\ell_{1}',\dots,\ell'_{n})$,
where $\ell_{r}'$ is the line through the intersection points $\ell_{r}\cap\ell_{r+2}$
and $\ell_{r+1}\cap\ell_{r+3}$. In the case $n=5$, the operators
$\L_{\{2\}}^{+}$ and $\L_{\{2\}}^{-}$ are in fact the pentagram
map and its inverse map. For $\cC_{0}$ generic, it is known that
the pentagons $\L_{\{2\}}^{\pm}(\cC_{0})$ are projectively equivalent
to $\cC_{0}$, see \cite{SchwartzPent}, so that the pentagram map
acts trivially on the realization space of five lines.

Let $\cC_{0}(w)$ be the pentagon arrangement with normal vectors
the canonical basis and $w=(x:y:z)$. For a generic choice of $w$,
the arrangement $\L_{\{2\}}^{0}(\cC_{0}(w))$ is the pentagon arrangement
whose normal vectors are the columns of the matrix 
\[
\left(\begin{array}{ccccc}
x & y-x & z & x & 0\\
y & 0 & y & y & 1\\
z & y-z & z & 0 & 1
\end{array}\right).
\]
By sending the first four normals to the canonical basis, one obtains
that $\L_{\{2\}}^{0}$ acts on the realization space $\cR_{5}$ of
realization of $M_{5}$ though the map $\l_{\{2\}}^{0}:\PP^{2}\to\PP^{2}$
which to $w=(x:y:z)$ associates {\scriptsize{}{} 
\[
\begin{array}{c}
w'=(x^{5}y^{2}z-4x^{4}y^{3}z+5x^{3}y^{4}z-2x^{2}y^{5}z-2x^{5}yz^{2}+6x^{4}y^{2}z^{2}-2x^{3}y^{3}z^{2}-5x^{2}y^{4}z^{2}+3xy^{5}z^{2}\\
+6x^{2}y^{3}z^{3}+xy^{4}z^{3}-y^{5}z^{3}-x^{4}z^{4}+3x^{3}yz^{4}+2x^{2}y^{2}z^{4}-4xy^{3}z^{4}-x^{2}yz^{5}+y^{3}z^{5}\\
:x^{4}y^{4}-x^{3}y^{5}-5x^{4}y^{3}z+4x^{3}y^{4}z+x^{2}y^{5}z+8x^{4}y^{2}z^{2}-2x^{3}y^{3}z^{2}-6x^{2}y^{4}z^{2}-4x^{4}yz^{3}\\
-6x^{3}y^{2}z^{3}+8x^{2}y^{3}z^{3}+2xy^{4}z^{3}+4x^{3}yz^{4}+x^{2}y^{2}z^{4}-5xy^{3}z^{4}-x^{2}yz^{5}+y^{3}z^{5}\\
:x^{5}y^{2}z-4x^{4}y^{3}z+4x^{3}y^{4}z-2x^{5}yz^{2}+8x^{4}y^{2}z^{2}-6x^{3}y^{3}z^{2}-4x^{2}y^{4}z^{2}\\
+x^{4}yz^{3}-8x^{3}y^{2}z^{3}+12x^{2}y^{3}z^{3}+xy^{4}z^{3}+2x^{2}y^{2}z^{4}-6xy^{3}z^{4}+y^{3}z^{5}).
\end{array}
\]
}One gets: 
\begin{cor}
Let $\cC_{0}$ be a generic pentagon arrangement. Then $\cC_{1}=\L_{\{2\}}^{0}(\cC_{0})$
is not projectively equivalent to $\cC_{0}$. 
\end{cor}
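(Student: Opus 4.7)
The plan is to reduce the corollary to showing that the explicit rational self-map $\lambda_{\{2\}}^{0}:\PP^{2}\dashrightarrow\PP^{2}$ displayed just before the statement is not the identity of $\PP^{2}$. Since the parameter $w=(x:y:z)$ encodes the $PGL_{3}$-orbit of the pentagon $\cC_{0}(w)$ (the first four normals having been sent to the canonical basis), one has $\L_{\{2\}}^{0}(\cC_{0}(w))\simeq_{PGL_{3}}\cC_{0}(w)$ if and only if $\lambda_{\{2\}}^{0}(w)=w$. Hence, once we know $\lambda_{\{2\}}^{0}\neq\mathrm{id}_{\PP^{2}}$, its fixed locus is a proper closed subscheme of $\PP^{2}$, and a generic $w$ lies in its complement, yielding the corollary.

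To establish $\lambda_{\{2\}}^{0}\neq\mathrm{id}$, I would argue by contradiction. Write $\lambda_{\{2\}}^{0}=(P_{1}:P_{2}:P_{3})$, where $P_{1},P_{2},P_{3}$ are the three homogeneous polynomials of degree $8$ listed in the excerpt. If $\lambda_{\{2\}}^{0}$ were the identity, then these three polynomials would have to share a common factor $Q$ of degree $7$ with $P_{1}=xQ$, $P_{2}=yQ$, $P_{3}=zQ$; in particular $x$ would have to divide $P_{1}$. Substituting $x=0$ in the explicit formula for $P_{1}$, every monomial containing $x$ vanishes and only the two terms $-y^{5}z^{3}$ and $y^{3}z^{5}$ remain, giving
\[
P_{1}|_{x=0}=y^{3}z^{3}(z-y)(z+y),
\]
which is not identically zero. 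This contradicts $x\mid P_{1}$, so $\lambda_{\{2\}}^{0}$ is not the identity.

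The only real obstacle is trusting the explicit formulas already displayed in the excerpt; once these are accepted, the one-line divisibility check above finishes the proof. A more conceptual alternative, which I would mention as a sanity check rather than execute in detail, is to take a smooth cubic $E$ with a cyclic subgroup $T_{O}=\langle t\rangle$ of order $5$ and a non-torsion point $p$, realize $\cC_{0}$ as $\cD(T_{p})$ in the spirit of Theorem~\ref{thm:T-mois-deux}, and observe via the geometric description of $\L_{\{2\}}^{0}$ given in Section~\ref{sec:The-pentagon} that this operator corresponds to multiplication by $-2\equiv 3\pmod{5}$ on $E$; the latter acts non-trivially modulo translations in $T_{O}$, so $\L_{\{2\}}^{0}(\cC_{0})$ cannot be projectively equivalent to $\cC_{0}$ for a generic $p$. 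The polynomial argument, however, is shorter and entirely self-contained.
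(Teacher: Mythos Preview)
Your proof is correct and follows the same route as the paper: both reduce the question to the statement that the rational self-map $\lambda_{\{2\}}^{0}$ of $\PP^{2}$ is not the identity, and then conclude that its fixed locus is a proper closed subset. The only difference is in how the non-identity is checked: the paper invokes that the three degree-$8$ polynomials are coprime (so they cannot share a degree-$7$ factor), whereas you verify directly that $x\nmid P_{1}$ via the substitution $x=0$; your check is equally valid and arguably more self-contained, since it does not appeal to a coprimality claim left to computer verification.
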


\begin{proof}
The labeled line arrangement $\L_{\{2\}}^{0}(\cC_{0}(w))$ is projectively
equivalent to $\cC_{0}(w)$ if and only if the point $w$ is equal
to $\l_{\{2\}}^{0}(w)$. The polynomials defining $\l_{\{2\}}^{0}$
being coprime of degree $8>1$, $\l_{\{2\}}^{0}$ is not the identity
map and $w\neq\l_{\{2\}}^{0}(w)$ for a generic $w$. 
\end{proof}
The base point set of $\l_{\{2\}}^{0}$ are the eight points {\scriptsize{}{}
\[
\begin{array}{c}
(0:1:0),(1:1:1),(0:0:1),(1:0:0),(1:1:0),(1:0:1),\\
(\sqrt{5}+3:\sqrt{5}+1:2),(-\sqrt{5}+3:-\sqrt{5}+1:2).
\end{array}
\]
}There is a pencil of cubics containing these points, with base loci
the line $x=y+z$. For the two points with coordinates in $\QQ(\sqrt{5})\setminus\QQ$,
the associated pentagon $\cC_{0}$ is the regular pentagon, the arrangement
$\cC_{1}=\L_{\{2\}}^{0}(\cC_{0})$ has a unique $5$-point, which
is the center of the regular pentagon, and $\cC_{0}\cup\cC_{1}$ is
a simplicial line arrangement with $10$ lines. 
\begin{prop}
\label{prop:Degree-rational-self-map}The rational self-map $\l_{\{2\}}^{0}$
has degree $4$. 
\end{prop}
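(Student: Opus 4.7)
The plan is to compute $\deg(\l_{\{2\}}^0)$ directly from its explicit presentation as a rational self-map of $\PP^2$. Write $\l_{\{2\}}^0 = (F_0 : F_1 : F_2)$, where the $F_i$ are the coprime homogeneous polynomials of degree $d=8$ displayed above. By the standard degree formula for a rational self-map of $\PP^2$ given by coprime polynomials of common degree $d$, one has
\begin{equation*}
\deg(\l_{\{2\}}^0) \;=\; d^2 - \sum_p m_p^2,
\end{equation*}
where the sum runs over all proper and infinitely near base points $p$, and $m_p$ denotes the local multiplicity of the base scheme at $p$. Since $d^2=64$, the assertion $\deg(\l_{\{2\}}^0)=4$ amounts to verifying $\sum_p m_p^2 = 60$.

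The first step is to confirm the coprimality of $F_0,F_1,F_2$ (asserted in the paragraph preceding the statement) by a direct $\gcd$ computation in \texttt{OSCAR} or \texttt{MAGMA}. The second step is to determine the base scheme: its support is the eight points listed in the preceding paragraph, and I need the multiplicities at each, including contributions from any infinitely near base points appearing after successive blowups. The coordinate vertices $(1{:}0{:}0),(0{:}1{:}0),(0{:}0{:}1)$ together with $(1{:}1{:}1),(1{:}1{:}0),(1{:}0{:}1)$ correspond to strongly degenerate pentagon configurations, and a local analysis of the ideal $(F_0,F_1,F_2)$ at each will typically yield multiplicity strictly greater than one, whereas the two regular-pentagon points $(\pm\sqrt{5}+3:\pm\sqrt{5}+1:2)$ are expected to be simple. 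Summing the squared multiplicities should give exactly $60$.

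A cleaner alternative, which bypasses any delicate bookkeeping for infinitely near base points, is to compute the degree as the length of a generic scheme-theoretic fiber. For a sufficiently generic target $q=(a:b:c)\in\PP^2$, the preimage $(\l_{\{2\}}^0)^{-1}(q)$ is cut out by the two equations $bF_0 - aF_1 = 0$ and $cF_0 - aF_2 = 0$, and its residual part off the base locus is obtained by saturating the ideal $(bF_0 - aF_1,\; cF_0 - aF_2)$ against the base ideal $(F_0, F_1, F_2)$. A Gr\"obner basis computation in \texttt{OSCAR} then certifies that this saturation is zero-dimensional of length exactly $4$.

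The principal obstacle is purely computational: the $F_i$ are bulky, and tracking base-point multiplicities (or, alternatively, performing the saturation) is mechanical but not short. Conceptually the answer is forced, however, by the geometric picture: on a generic pentagon $\cC_0$ the operator $\L_{\{2\}}^0$ plays the role that $\L$ plays for $n\ge 7$ in Theorem \ref{thm:Main-X}, namely inducing multiplication by $-2$ on the elliptic fibers of the associated modular surface; since multiplication by $-2$ on an elliptic curve has degree $4$, so must $\l_{\{2\}}^0$.
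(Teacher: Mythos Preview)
Your two computational routes are both sound in principle, and at the level of rigor the paper adopts (deferring explicit verification to \texttt{MAGMA}/\texttt{OSCAR}) either would be acceptable. The paper, however, takes a lighter path than either of yours: it fixes a pencil of lines $\{L_t:\,x+ty=0\}$, pulls it back to a pencil of degree-$8$ curves $C_t=\{F_0+tF_1=0\}$, and then computes directly that the restricted map $\l_{\{2\}}^0\colon C_t\to L_t$ has degree $4$ for generic $t$. This is close in spirit to your second approach---both are generic-fiber counts---but packaged as a single map of curves rather than an ideal saturation in three variables, which keeps the machine computation short. Your first approach via $d^2-\sum_p m_p^2$ is correct but genuinely heavier here: with only eight proper base points and $\sum_p m_p^2=60$ to account for, one cannot avoid non-simple or infinitely near base points, so the bookkeeping you flag as ``delicate'' is unavoidable, whereas the pencil method sidesteps it entirely.

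One caveat on your closing paragraph: the identification of $\l_{\{2\}}^0$ with multiplication by $-2$ on the fibers of $\Xi_1(5)$ is established in the paper only \emph{after} this proposition (in the theorem immediately following), so invoking it here as a proof would be circular; as heuristic motivation it is of course fine.
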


\begin{proof}
Consider a pencil $\cP$ of lines (for example $\{L_{t}:(x+ty=0)\,|\,t\in\PP^{1}\}$),
the pull-back $\{C_{t}:t\in\PP^{1}\}$ is a family of curves. One
computes that for the generic point of $\PP^{1}$, the degree of the
map $\l_{\{2\}}^{0}:C_{t}\to L_{t}$ is $4$.

\end{proof}
Let $\Psi_{\{2\}}^{0}$ be the operator acting on labelled arrangements
of $5$ points defined by $\Psi_{\{2\}}^{0}=\cD\circ\L_{\{2\}}^{0}\circ\cD$. 
\begin{thm}\label{cinq}
The realization space $\cR_{5}$ is birational to the modular elliptic
surface $\Xi_{1}(5)$. The operator $\Psi_{\{2\}}^{0}$ acts on $\Xi_{1}(5)$
as the map $(E,p,t)\to(E,[-2]p,[-2]t)$. 
\end{thm}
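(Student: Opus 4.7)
The plan is to construct a birational map $\Gamma_5\colon\Xi_1(5)\to\cR_5$ mimicking Proposition \ref{prop:injXitoRn}, verify that $\Psi_{\{2\}}^0$ corresponds under $\Gamma_5$ to the asserted self-map of $\Xi_1(5)$, and combine these to obtain both halves of the theorem.

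First, I define $\Gamma_5(E,p,t)=[\cD(T_p)\cup\cD(T_{-2p})]$, with the labelling of the ten dual lines by $\ZZ/5\ZZ\sqcup\ZZ/5\ZZ'$ fixed so that non-bases $\{i,j,r\}$ with $i+j+r\equiv 0\pmod 5$ match the concurrent triples. The group law on $E$ gives $p+it,\,p+jt,\,-2p+rt$ summing to $O$ iff $i+j+r\equiv 0\pmod 5$, which is exactly the matroid condition dualized; the genericity hypothesis $6p\notin\langle t\rangle$ rules out triple points on the pentagons $\cD(T_p)$ and $\cD(T_{-2p})$, just as in Theorem \ref{thm:T-mois-deux}. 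Hence $\Gamma_5(E,p,t)\in\cR_5$.

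Next, I compute the generic fiber of $\Gamma_5$ following the argument of Proposition \ref{prop:injXitoRn}. Any $\text{PGL}_3$-equivalence between two such realizations restricts to a projective automorphism $z\mapsto\varepsilon z+c$ of the common underlying cubic, with $\varepsilon\in\{\pm 1\}$ and $c\in E[3]$. The case $\varepsilon=+1$ yields the nine preimages $(E,p+t_3,t)$ for $t_3\in E[3]$; the case $\varepsilon=-1$ gives $(E,-p+c,-t)$, which already coincides in $\Xi_1(5)$ with $(E,p-c,t)$ via the $[-1]$-isomorphism of triples and so contributes no new preimages. Combined with a dimension count ($\dim\cR_5\le 20-10-8=2=\dim\Xi_1(5)$) and the finiteness of fibers, $\Gamma_5$ is dominant with generic fiber $E[3]$. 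Thus $\Gamma_5$ factors through $\Xi_1(5)/K_3$, which the isogeny $[3]:\Xi_1(5)\to\Xi_1(5)$ identifies birationally with $\Xi_1(5)$, yielding the birational equivalence $\cR_5\sim\Xi_1(5)$ and the irreducibility of $\cR_5$ near the image.

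To identify $\Psi_{\{2\}}^0$ with the map $(E,p,t)\mapsto(E,[-2]p,[-2]t)$, I compute directly. By the combinatorial definition of $\L_{\{2\}}^0$, the $j$-th output line passes through $p_{r,s}$ and $p_{t,u}$ with $r+s=t+u=2j\pmod 5$; dualizing, $\Psi_{\{2\}}^0$ sends the $j$-th point $P_j=p+jt$ of the pentagon to the unique common intersection of $\overline{P_rP_s}$ and $\overline{P_tP_u}$, which on the cubic $E$ is the third intersection point $-(P_r+P_s)=-2p-2jt=[-2]P_j$. Therefore $\Psi_{\{2\}}^0$ sends $(p+kt)_{k\in\ZZ/5\ZZ}$ to $(-2p-2kt)_{k\in\ZZ/5\ZZ}$, which is $T_{-2p}$ with generator $-2t$. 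Transporting across $\Gamma_5$ gives the asserted action on $\Xi_1(5)$; as a consistency check, Proposition \ref{prop:Degree-rational-self-map} shows $\lambda_{\{2\}}^0$ has degree $4$, matching the degree of $[-2]$ on each elliptic fiber of $\Xi_1(5)\to X_1(5)$.

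The main obstacle is that Theorem \ref{Thm:ExiststUniCubCur}, which for $n\ge 7$ supplied the crucial fact that every generic realization of $M_n$ lies on a unique cubic, does not apply at $n=5$: the iterated Chasles argument needs $3\times 3$ subgrids of distinct indexed points, which is impossible when the index set has only five elements. I circumvent this by the dimension-count route above. Alternatively, one can recover a Chasles-style existence proof by augmenting the ten realization points with further iterates under $\Psi_{\{2\}}^0$ until enough distinct points are available to apply \cite[Lemma 4.4]{GT}, but the dimension argument is considerably cleaner.
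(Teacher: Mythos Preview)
Your overall strategy is sound and genuinely different from the paper's. The paper proceeds by direct computation: starting from the explicit two-parameter family $\cC_0(w)$ of pentagons, it uses a computer algebra system to verify that the ten normal points of $\cC_0(w)\cup\L_{\{2\}}^0(\cC_0(w))$ lie on a unique smooth cubic $E_w$ with the points in $5$-torsion progression, thereby constructing the inverse of your $\Gamma_5$ directly on the realization side. Your approach instead adapts the $n\ge 7$ argument of Proposition~\ref{prop:injXitoRn}, which avoids computer algebra but then requires a substitute for Theorem~\ref{Thm:ExiststUniCubCur}.

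There is a real gap in your substitute. The count $\dim\cR_5\le 20-10-8$ is not a valid upper bound: intersecting a $20$-dimensional parameter space with ten non-basis hypersurfaces forces every component to have dimension \emph{at least} $10$, and modding out by $PGL_3$ yields $\dim\cR_5\ge 2$, the opposite inequality. Nor does your argument establish irreducibility of $\cR_5$, without which dominance of $\Gamma_5$ onto a $2$-dimensional image gives birationality only with some component, not with $\cR_5$ itself.

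Both issues are repaired by a single observation already implicit in the paper's set-up: each line $\ell'_j$ of $\cC_1$ is determined by two specified double points of $\cC_0$, so the forgetful map from $\cR_5$ to the moduli of five labelled lines in general position is injective. Hence $\cR_5$ is an open subscheme of that moduli space, which is an irreducible open subset of $\PP^2$; in particular $\cR_5$ is irreducible of dimension $2$. With this in hand your argument goes through: $\Gamma_5$ is generically nine-to-one onto an irreducible $2$-dimensional target, hence dominant, and factoring through $\Xi_1(5)/K_3\simeq\Xi_1(5)$ gives the birational equivalence. Your identification of $\Psi_{\{2\}}^0$ with $(E,p,t)\mapsto(E,[-2]p,[-2]t)$ via the group law on the cubic is correct and is essentially the calculation the paper indicates.
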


\begin{proof}
For the generic point $w=(a:b:1)$ in the plane, let $P_{5}=(n_{1},\dots,n_{5})$
and $P_{5}'=(n_{1}',\dots,n_{5}')$ be the normal vectors to $\cC_{0}(w)\text{ and }\cC_{1}(w)$,
where $\cC_{1}(w)=\L_{\{2\}}^{0}(\cC_{0}(w))$. One computes by using
MAGMA that there is a unique cubic curve 
\[
E_{w}:\,x^{2}y-\tfrac{a}{b}xy^{2}-ax^{2}z+\tfrac{a^{2}b-a-b^{2}+b}{b^{2}-b}xyz+\tfrac{ab-a^{2}}{b^{2}-b}y^{2}z+\tfrac{ab-a^{2}}{b-1}xz^{2}+\tfrac{a^{2}-ab}{b^{2}-b}yz^{2}
\]
which contains the normal vectors in $P_{5}\cup P_{5}'$. One computes
moreover that the curve $E_{w}$ is smooth for generic $w$, and that
the points $n_{j}-n_{1}$ and $n_{j}'-n_{1}'$ for $j\in\{1,\dots,5\}$
are $5$-torsion points on the cubic $E_{w}$. A last computation
gives that the map $\Psi_{\{2\}}^{0}=\cD\circ\L_{\{2\}}^{0}\circ\cD$
sends the labeled point arrangement $(n_{1},\dots,n_{5})$ to $(n_{1}',\dots,n_{5}')$,
and this is the map $T_{p}\to T_{-2p}$ described in Section \ref{subsec:Labelled-arrangements-of}, which is 9-to-1, with kernel the $3$-torsion points, so that the proof goes as for the cases $n>6$.  
\end{proof}

\begin{rem}
Given a word $(\e_{1},\dots,\e_{n})$, $\e_{j}\in\{-1,0,1\}$, we
can define an operator $\L_{\{2\}}^{\e_{1}}\cdots\L_{\{2\}}^{\e_{n}}$.
It would be interesting to understand if there are relations between
the operators $\L_{\{2\}}^{\e}$ other than $\L_{\{2\}}^{1}\L_{\{2\}}^{-1}=I_{d}$.
\end{rem}

\subsection*{Periodic arrangements}
A $3$-periodic line arrangement may be obtained as follows: 
Let $p$ be a $9$-torsion point on a plane elliptic curve.
With the notations as  above, define $\cC_{0}=\cD(T_{p})$, and for
$n\geq 0$ define $\cC_{n+1}=\L_{\{2\}}^{0}(\cC_n)$. 
The union  $\cC_{0} \cup \mathcal{C}_1$ is a realization of $M_5$.
By Theorem \ref{cinq}, since $(-2)^3p=p$ and therefore $T_{(-2)^3p}=T_p$, 
the sequence of line arrangements 
$\cC_n$ is $3$-periodic: $\cC_{n+3}=\cC_n$.

The line arrangement $\cC_{0}\cup\cC_{1}\cup\cC_{2}$
has singularities $t_{2}=15,t_{3}=30$. 
One computes that the realization space of the matroid associated to 
$\cC_0 \cup \cC_1 \cup \cC_2$  is a smooth irreducible curve $C$, and the map 
$\cC_0 \cup \cC_1 \cup \cC_2 \to \cC_0 \cup \cC_1 \in \cR_{5}$ is 
an embedding with an inverse, since $\cC_2=\L_{\{2\}}^{0}(\cC_1)$.
The curve $C$ has a smooth compactification $\bar{C}$ of genus $1$, which
parametrizes some line arrangements of period $3$ for $\L_{\{2\}}^{0}$.
 The $j$-invariant
of $\bar{C}$ is $-1/15$; the curve $\bar{C}$ is isomorphic to the
modular curve $X_{1}(15)$ (in the LMFDB this is the curve with label
15.a7).

We now describe further examples of periodic arrangements
under the operator $\L_{\{2\}}^{0}$ with small periods. 
\begin{enumerate}
\item For a $7$-torsion point $p$, one gets a line arrangement $\cC_{0}=\cD(T_{p})$
which is $6$-periodic. The union of the $30=6\cdot5$ lines has singularities
$t_{2}=105,t_{3}=110$.
\item For an $11$-torsion point $p$, one obtains a line arrangement $\cC_{0}=\cD(T_{p})$
which is $5$-periodic. The union of the $25$ lines has singularities
$t_{2}=150,t_{3}=50$.
\item For $13$-torsion point $p$, one gets a line arrangement which is
$12$-periodic. The union of the $60$ lines has singularities $t_{2}=210,t_{3}=520$.
The union of that line arrangement with the dual of the $5$-torsion
points is an arrangement $\cA$ of $65$ lines such that $t_{2}=64,\,t_{3}=672$.
The number of triple points of $\cA$ lines matches the upper bound
by Green--Tao \cite[Theorem 1.3]{GT} for real line arrangements.
This is explained by the fact that $\cA$ is the dual of a group of
torsion points. 
\end{enumerate}

\section{\label{sec:The-hexagon}The hexagon and the operator $\protect\L_{2|3}$}

Figure \ref{fig:Hexagon} depicts the union of the regular hexagon
and its lines of symmetries. Consider the matroid~$M_{6}$ with $12$
atoms obtained from Figure \ref{fig:Hexagon} by keeping the labeling
and removing the conditions imposed by the central point. This is
a degenerate case of the matroids defined in Section \ref{subsec:Description-Matro}:
three of the blue lines contain double points only, and the operator
$\L_{\{2\},\{2,3\}}$ would return too many lines.

For a point $p=(x:y:z)$ in an open set of $\PP^{2}$, let $\cA=\cA(p)$
be the labeled arrangement of $12$ lines defined by the following
normal vectors: the four vectors of the canonical basis of $\PP^{2}$
and (in that order) the vectors 
\begin{equation}
\begin{array}{c}
(xz:x^{2}-2xy+y^{2}+xz:yz),(xz:x^{2}-xy+xz:xy-y^{2}+yz)\\
(xz:x^{2}-2xy+y^{2}+xz:xy-y^{2}+yz),(z:x-y+z:0),(x:0:x-y+z)\\
(0:1:1),(yz:x^{2}-2xy+y^{2}+xz:yz),(x:x:y).
\end{array}\label{eq:C0-C1-of-Hexagon}
\end{equation}
With these vectors, one can compute:
\begin{prop}
\label{prop:Realisation-Hexa}For $p$ generic in $\PP^{2}$, the
line arrangements $\cA(p)$ form an open subset of the realization
space $\cR_{6}$ over $\CC$. 
\end{prop}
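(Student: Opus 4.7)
The proof is essentially computational, using the explicit form of the normal vectors in \eqref{eq:C0-C1-of-Hexagon}. The plan is to verify in turn that $\cA(p)$ is a realization of $M_6$, that the map $p\mapsto [\cA(p)]$ is birational onto its image, and finally that the image is open.

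\textbf{Step 1: realization check.} Assemble the four canonical basis vectors together with the eight vectors listed in \eqref{eq:C0-C1-of-Hexagon} as the columns of a $3\times 12$ matrix $N(x,y,z)$ with entries in $\ZZ[x,y,z]$. For every triple $\{i,j,k\}$ declared a non-basis by the matroid $M_6$ (read off the hexagon in Figure \ref{fig:Hexagon}, after removing the relations coming from the central point), compute the $3\times 3$ minor $\D_{ijk}$ of $N$ on columns $i,j,k$ and verify that $\D_{ijk}\equiv 0$ in $\ZZ[x,y,z]$; for every triple declared a basis, verify that $\D_{ijk}$ is a nonzero polynomial. The complement in $\PP^{2}$ of the finite union of their zero loci is a dense open $U\subset\PP^{2}$ on which $\cA(p)$ is a genuine realization of $M_6$. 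These are finitely many polynomial identities, carried out mechanically in \texttt{OSCAR} or \texttt{MAGMA}.

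\textbf{Step 2: birationality onto the image.} Because the first four columns of $N$ are the canonical basis, the matrix $N(x,y,z)$ is already in the canonical normal form used to construct the realization space in Section \ref{sec:Matroids}. The rational map
\[
\Phi:\PP^{2}\dashrightarrow\cR_{6},\qquad p\longmapsto [\cA(p)],
\]
is therefore nothing but the map recording the entries of the last eight columns of $N(x,y,z)$. A rational inverse is obtained by inspection of \eqref{eq:C0-C1-of-Hexagon}: for instance, the coordinates $x$, $y$, $z$ of $p$ appear, up to proportionality, as individual entries of the eighth, ninth and twelfth columns, so $(x:y:z)$ is a rational function of a canonical representative of $[\cA(p)]$.

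\textbf{Step 3: openness of the image.} Being birational onto its image, $\Phi(U)$ is a $2$-dimensional irreducible constructible subset of $\cR_{6}$. A final determinantal computation at a single generic point $p_{0}\in U$ (via the Jacobian of the matroid relations in the remaining entries) shows that the local dimension of $\cR_{6}$ at $\Phi(p_{0})$ is also $2$, so $\Phi(U)$ is open in the unique component of $\cR_{6}$ it meets. Since $\cR_{6}$ is an affine scheme (see \cite{Oscar}), $\Phi(U)$ is the asserted open subset.

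The main obstacle is Step 1: one must reliably enumerate the non-bases of $M_6$ from the degenerate hexagon figure (where some lines of symmetry contain only double points) and confirm that the resulting $220$ determinantal identities match the ones satisfied by $N(x,y,z)$, with no accidental vanishing among the basis minors. Once this is done, Steps 2 and 3 are formal.
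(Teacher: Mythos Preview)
Your proposal is correct and takes the same computational approach as the paper, whose entire proof of this proposition is the phrase ``With these vectors, one can compute.'' The explicit rational inverse you sketch in Step~2 is essentially what the paper later writes out in the proof of Proposition~\ref{prop:ational-self-map-hexa}, there using ratios of entries from the fifth and sixth columns of $N$ rather than your choice of the eighth, ninth, and twelfth.
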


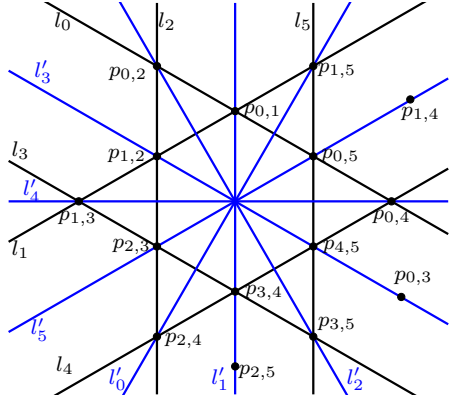
\begin{figure}[h]
\begin{centering}
\begin{tikzpicture}[scale=0.4]

 \clip(-7.5,-6.32) rectangle (7.68,6.7); \draw [line width=0.3mm,domain=-6.93:7.68] plot(\x,{(--7.15692193816531--1.5*\x)/2.5980762113533187}); \draw [line width=0.3mm] (-2.,-6.32) -- (-2.,6.7); \draw [line width=0.3mm,domain=-6.93:7.68] plot(\x,{(-6.637306695894642-1.5*\x)/2.598076211353316}); \draw [line width=0.3mm,domain=-6.93:7.68] plot(\x,{(-8.431535329954592--1.5*\x)/2.598076211353317}); \draw [line width=0.3mm] (3.196152422706633,-6.32) -- (3.196152422706633,6.7); \draw [line width=0.3mm,domain=-6.93:7.68] plot(\x,{(-8.951150572225256--1.5*\x)/-2.598076211353315}); \draw [line width=0.3mm,color=blue,domain=-6.93:7.68] plot(\x,{(--2.3138438763306124-3.*\x)/5.196152422706633}); \draw [line width=0.3mm,color=blue] (0.5980762113533172,-6.32) -- (0.5980762113533172,6.7); \draw [line width=0.3mm,color=blue,domain=-6.93:7.68] plot(\x,{(--1.2746133917892872-3.*\x)/-5.196152422706634}); \draw [line width=0.3mm,color=blue,domain=-6.93:7.68] plot(\x,{(--5.902301144450522-9.*\x)/5.196152422706632}); \draw [line width=0.3mm,color=blue,domain=-6.93:7.68] plot(\x,{(--1.0392304845413236-0.*\x)/10.392304845413264}); \draw [line width=0.3mm,color=blue,domain=-6.93:7.68] plot(\x,{(-4.863070659909189--9.*\x)/5.196152422706634}); \begin{scriptsize} \draw [fill=black] (-2.,1.6) circle (1.2mm); \draw[color=black] (-2.9945,1.58) node {$p_{1,2}$}; \draw [fill=black] (-2.,-1.4) circle (1.2mm); 

\draw[color=black] (-2.835,-1.39) node {$p_{2,3}$}; \draw [fill=black] (0.598076211353316,-2.9) circle (1.2mm);

\draw[color=black] (1.565,-2.93) node {$p_{3,4}$}; \draw [fill=black] (3.196152422706633,-1.4) circle (1.2mm);

\draw[color=black] (4.155,-1.43) node {$p_{4,5}$}; \draw [fill=black] (3.1961524227066334,1.6) circle (1.2mm);

\draw[color=black] (4.1,1.58) node {$p_{0,5}$}; \draw [fill=black] (0.5980762113533185,3.1) circle (1.2mm);

\draw[color=black] (1.505,3.1) node {$p_{0,1}$};

\draw[color=black] (-6.525,-1.49) node {$l_1$};

\draw[color=black] (-1.665,6.1) node {$l_2$}; 

\draw[color=black] (-6.525,1.81) node {$l_3$}; 

\draw[color=black] (-5.055,-5.36) node {$l_4$};

\draw[color=black] (2.865,6.1) node {$l_5$}; 
\draw[color=black] (-5.115,6.1) node {$l_0$}; 

\draw [fill=black] (-4.598076211353316,0.1) circle (1.2mm); \draw[color=black] (-4.635,-0.47) node {$p_{1,3}$}; 

\draw [fill=black] (-2.,-4.4) circle (1.2mm); 
\draw[color=black] (-1.095,-4.54) node {$p_{2,4}$}; \draw [fill=black]

(3.196152422706632,-4.4) circle (1.2mm); 
\draw[color=black] (3.99,-3.993) node {$p_{3,5}$}; \draw [fill=black]

(5.794228634059948,0.1) circle (1.2mm); 
\draw[color=black] (5.805,-0.44) node {$p_{0,4}$}; 

\draw [fill=black] (3.196152422706634,4.6) circle (1.2mm);
\draw[color=black] (3.99,4.45) node {$p_{1,5}$};

\draw [fill=black] (-2.,4.6) circle (1.2mm); 
\draw[color=black] (-2.955,4.33) node {$p_{0,2}$}; 

\draw [fill=black] (0.6,-5.39) circle (1.2mm);
\draw[color=black] (1.35,-5.59) node {$p_{2,5}$};

\draw [fill=black] (6.12,-3.08) circle (1.2mm); 
\draw[color=black] (6.525,-2.45) node {$p_{0,3}$}; 

\draw [fill=black] (6.42,3.49) circle (1.2mm); 
\draw[color=black] (6.765,3.) node {$p_{1,4}$};

\draw[color=blue] (-5.775,4.41) node {$l_{3}'$}; 

\draw[color=blue] (0.105,-5.8) node {$l_{1}'$};

\draw[color=blue] (-5.895,-4.16) node {$l_{5}'$};

\draw[color=blue] (4.61,-5.8) node {$l_{2}'$};

\draw[color=blue] (-6.15,0.47) node {$l_{4}'$};

\draw[color=blue] (-3.3,-5.8) node {$l_{0}'$}; 

\end{scriptsize}

\end{tikzpicture}
\par\end{centering}

\caption{\label{fig:Hexagon}The regular hexagon and the axes of symmetries.}
\end{figure}

{\scriptsize{}{}}Let us define combinatorially an operator $\L_{2|3}$
acting on the space of labeled hexagons. That operator is constructed
in such a way that if $\cC_{0}$ (resp.\ $\text{\ensuremath{\cC}}_{1}$)
denote the first six lines (resp.\ the last six lines) of a realization
$\cA$ of $M_{6}$, then one has $\L_{2|3}(\cC_{0})=\cC_{1}$. Let
$\cC_{0}=(\ell_{0},\dots,\ell_{5})$ be a hexagon; let us denote by
$p_{i,j}$ the intersection point of lines $\ell_{i}$ and $\ell_{j}$,
$i\neq j\in\ZZ/6\ZZ$. We define combinatorially the line arrangement
$\cC_{1}$ as follows: for each set $S_{k}$ ($0\leq k\leq5$) of
points in the following ordered list:

{\small{}{} 
\[
\{p_{1,5},p_{2,4}\},\,\,\{p_{0,1},p_{2,5},p_{3,4}\},\,\,\{p_{0,2},p_{3,5}\},\,\,\{,p_{0,3},p_{1,2},p_{4,5}\},\,\,\{p_{0,4},p_{1,3}\},\,\,\{p_{0,5},p_{1,4},p_{2,3}\},
\]
}let $\ell_{k}'$ be the union of the lines containing at least two
points of $S_{k}$. Then $\ell_{k}'$ for $k=0,2,4$ is one line and
$\ell_{1}',\ell_{3}',\ell_{5}'$ are the union of three or one line
depending on if the points in $S_{k}$ are collinear or not. The line
arrangement $\cC_{1}=\L_{2|3}(\cC_{0})$ is then the union $\ell_{0}'+\dots+\ell_{5}'$.
It contains at least $6$ lines, and if $\cC_{1}$ contains six lines
there is a natural labeling. As mentioned above, that operator is
build such that if $\cA_{1}=\cC_{0}\cup\cC_{1}$ is a generic realization
of $M_{6}$, where $\cC_{0}$ is the union of the first six lines,
then one has $\L_{2|3}(\cC_{0})=\cC_{1}$. 
\begin{thm}
\label{thm:HEXA}Let $\cA=\cC_{0}\cup\cC_{1}$ be a generic realization
of $M_{6}$ (so that $\L_{2|3}(\cC_{0})=\cC_{1}$). Then $\cC_{2}=\L_{2|3}(\cC_{1})$
is again a labeled hexagon and $\cA'=\cC_{1}\cup\cC_{2}$ is a realization
of $M_{6}$. 
\end{thm}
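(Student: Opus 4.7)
The plan is to follow the strategy of Sections \ref{subsec:Labelled-arrangements-of}--\ref{subsec:A-point-realization}, adapted from $n\geq 7$ to the case $n=6$ by means of the explicit model of Proposition \ref{prop:Realisation-Hexa}. The goal is to realize a generic $\cC_0\cup\cC_1\in\cR_6$ as coming from an elliptic curve $E$ equipped with a cyclic subgroup $T_O$ of order $6$ and a base point $q$, such that $\cD(\cC_0)=T_q$ and $\cD(\cC_1)=T_{-2q}$, and then to identify $\L_{2|3}$ with multiplication by $-2$ on $E$. It will then follow that $\cD(\cC_2)=T_{4q}$ is a six-point set and that $\cC_1\cup\cC_2$ realizes $M_6$ by the same mechanism that produced $\cC_0\cup\cC_1$, now with base point $-2q$ in place of $q$.

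First I would take a generic realization $\cA=\cA(p)$ as in Proposition \ref{prop:Realisation-Hexa}, with normal vectors given by (\ref{eq:C0-C1-of-Hexagon}). I would then establish an $n=6$ analog of Theorem \ref{Thm:ExiststUniCubCur}: the twelve normal vectors of $\cA(p)$ lie on a unique cubic curve $E_p$. This is obtained by iteratively applying Chasles's Lemma \ref{lem:CHASLES} to the nine collinearity relations encoded by the non-bases $\{i,j;r\}$ of $M_6$ with $i+j+r=0\in\ZZ/6\ZZ$, exactly as in the proof of Theorem \ref{Thm:ExiststUniCubCur}. An explicit equation for $E_p$ can be produced symbolically in $(x:y:z)$. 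For $p$ in a dense open set, $E_p$ is smooth (or nodal, in which case the argument of Section \ref{subsec:The-nodal-cubic} applies), and the differences among the first six normals generate the cyclic subgroup $T_O\subset E_p$ of order $6$, yielding the identification $\cD(\cC_0)=T_q$, $\cD(\cC_1)=T_{-2q}$ for a well-defined triple $(E_p,q,t)$.

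Next I would verify that $\L_{2|3}$ corresponds, on the dual side, to the map $T_p\mapsto T_{-2p}$. The sets $S_0,\dots,S_5$ used in the combinatorial definition of $\L_{2|3}$ are precisely the collections of intersection points $p_{i,j}$ satisfying a fixed congruence on $i+j\mod 6$, matching the non-basis relation $i+j+k=0$ defining $M_6$. Consequently $\cD(\cC_2)=T_{-2(-2q)}=T_{4q}$, which has exactly six elements for generic $q$, so $\cC_2$ is a labeled hexagon. That $\cC_1\cup\cC_2$ realizes $M_6$ then follows by applying the $n=6$ version of Theorem \ref{thm:T-mois-deux} with base point $q':=-2q$: for generic $q$ the condition $6q'\notin T_O$ holds, hence $\cD(T_{q'}\cup T_{-2q'})=\cC_1\cup\cC_2$ realizes $M_6$.

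The main obstacle is combinatorial: the sets $S_1,S_3,S_5$ each contain three points, and yield a single line $\ell_k'$ only when those three points are collinear, otherwise three extra lines would appear. One must check these collinearities persist generically in $\cC_1$, which is dictated by the non-basis relations of $M_6$ involving the second copy of $\ZZ/6\ZZ$. A clean alternative is a direct computer-algebra verification: starting from the normal vectors (\ref{eq:C0-C1-of-Hexagon}), compute $\L_{2|3}(\cC_1)$ symbolically in \texttt{OSCAR} or \texttt{MAGMA}, check that it consists of exactly six lines for generic $p$, and verify that the union $\cC_1\cup\cC_2$ satisfies all the non-basis equations of $M_6$. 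This computational route also cleanly handles the degeneracies specific to $M_6$ (the three lines of $\cC_1$ containing only double points) that distinguish the hexagon case from $n\geq 7$.
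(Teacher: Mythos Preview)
Your proposal is correct, and the ``clean alternative'' you offer at the end is in fact the paper's entire proof: it computes $\cC_2=\L_{2|3}(\cC_1)$ symbolically from the parametrization (\ref{eq:C0-C1-of-Hexagon}), checks that the result has six lines, and verifies that $\cC_1\cup\cC_2$ has matroid $M_6$. Your primary route through the elliptic curve $E_p$ is more elaborate than needed for this particular statement, but it is precisely the conceptual picture the paper develops for $n\geq 7$ and for $n=5$, and it explains \emph{why} the theorem holds rather than merely confirming it. Two small cautions on that route: the Chasles/Green--Tao argument behind Theorem \ref{Thm:ExiststUniCubCur} needs care at $n=6$ because $3=-3$ in $\ZZ/6\ZZ$ collapses some of the $3\times 3$ grids used there (so the direct symbolic construction of $E_p$ you mention is the safer path); and $M_6$ has fifteen non-bases, not nine. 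Once $\cD(\cC_0)=T_q$ and $\cD(\cC_1)=T_{-2q}$ on $E_p$ are in hand, your identification of $\L_{2|3}$ with multiplication by $-2$ goes through exactly as for $n\geq 7$, and the required collinearity of the triples in $S_1,S_3,S_5$ for $\cC_1$ is then automatic, since all three double points are dual to lines through the single point $4q+rt\in E_p$.
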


\begin{proof}
One computes that for $p=(x:y:z)$ generic in $\PP^{2}$, the line
arrangement $\cC_{2}=\L_{2|3}(\cC_{1})$ contains six lines, and that
the union $\cA'=\cC_{1}\cup\cC_{2}$ defines the same matroid $M_{6}$
as $\cA$. 
\end{proof}
Since $\L_{2|3}(\cC_{0})=\cC_{1}$ and $\cA=\cC_{0}\cup\cC_{1}$,
the realization space $\cR_{6}$ may also be viewed as a realization
space for the hexagons $\cC_{0}$.

Let us denote by $\l_{2|3}$ the action of $\L_{2|3}$ on the realization
space $\cR_{6}$ of $M_{6}$. By Proposition \ref{prop:Realisation-Hexa},
that action is also an action on $\PP^{2}$. One has 
\begin{prop}
The \label{prop:ational-self-map-hexa}rational self-map $\l_{2|3}:\PP^{2}\dashrightarrow\PP^{2}$
is the map which to $(x:y:z)$ associates the point{\footnotesize{}
\begin{equation}
\begin{array}{c}
(-4x^{4}yz+16x^{3}y^{2}z-28x^{2}y^{3}z+24xy^{4}z-8y^{5}z-8x^{3}yz^{2}+24x^{2}y^{2}z^{2}\\
-28xy^{3}z^{2}+12y^{4}z^{2}-5x^{2}yz^{3}+10xy^{2}z^{3}-6y^{3}z^{3}-xyz^{4}+y^{2}z^{4}\,:\,-2x^{5}y\\
+10x^{4}y^{2}-18x^{3}y^{3}+14x^{2}y^{4}-4xy^{5}-7x^{4}yz+26x^{3}y^{2}z-35x^{2}y^{3}z+20xy^{4}z\\
-4y^{5}z-9x^{3}yz^{2}+24x^{2}y^{2}z^{2}-21xy^{3}z^{2}+6y^{4}z^{2}-5x^{2}yz^{3}+9xy^{2}z^{3}\\
-4y^{3}z^{3}-xyz^{4}+y^{2}z^{4}\,:\,x^{6}-8x^{5}y+25x^{4}y^{2}-38x^{3}y^{3}+28x^{2}y^{4}-8xy^{5}\\
+3x^{5}z-19x^{4}yz+44x^{3}y^{2}z-44x^{2}y^{3}z+16xy^{4}z+3x^{4}z^{2}-15x^{3}yz^{2}\\
+24x^{2}y^{2}z^{2}-12xy^{3}z^{2}+x^{3}z^{3}-4x^{2}yz^{3}+4xy^{2}z^{3})
\end{array}\label{eq:Self-map-hexa}
\end{equation}
}The indeterminacy points of $\l_{2|3}$ are the $3$ points {\footnotesize{}$(0:0:1),(-1:0:1),(1:1:0)$.}{\footnotesize\par}

The degree of $\l_{2|3}$ is $4$. 
\end{prop}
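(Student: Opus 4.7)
The plan is to verify each of the three assertions by direct symbolic calculation in the parameterization of $\cR_6$ by $\PP^2$ furnished by Proposition \ref{prop:Realisation-Hexa}. To derive formula \eqref{eq:Self-map-hexa}, I would start from $\cA(p) = \cC_0 \cup \cC_1$ with the explicit normal vectors of \eqref{eq:C0-C1-of-Hexagon} and apply the combinatorial recipe defining $\L_{2|3}$ to $\cC_1$. Concretely, for each of the six prescribed sets $S_k$ I compute the intersection points $p_{i,j}$ of the relevant lines of $\cC_1$, and then form $\ell_k'$ as the line through two of those points (or through three, when collinearity holds; this holds generically by Theorem \ref{thm:HEXA}). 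This produces $\cC_2 = \L_{2|3}(\cC_1)$ as an ordered tuple of six lines whose normal vectors are rational in $(x,y,z)$. The map $\l_{2|3}$ is then obtained by renormalizing $\cC_1 \cup \cC_2$ so that its first four normal vectors form the canonical projective frame; the parameterization of Proposition \ref{prop:Realisation-Hexa} identifies the image $\l_{2|3}(x:y:z)$ with the fifth normal vector in the renormalized arrangement. Simplifying and clearing common factors yields the three homogeneous polynomials of degree $6$ in \eqref{eq:Self-map-hexa}.

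For the indeterminacy locus, a direct substitution shows that each of the three points $(0:0:1)$, $(-1:0:1)$, $(1:1:0)$ annihilates all three coordinate polynomials. Conversely, computing a primary decomposition of the ideal generated by these three polynomials (for instance in \texttt{OSCAR}) confirms that the reduced base locus consists of exactly these three points, with no embedded or extraneous components.

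For the degree claim, I follow the pencil strategy from the proof of Proposition \ref{prop:Degree-rational-self-map}. Pick a generic pencil of lines $\{L_t\}_{t \in \PP^1}$ in the target and let $C_t := \l_{2|3}^{-1}(L_t)$ be the pull-back family of plane sextics. For generic $t$, the restriction $\l_{2|3}|_{C_t} \colon C_t \to L_t \cong \PP^1$ is a dominant morphism of smooth rational curves whose degree equals the degree of $\l_{2|3}$. A Gr\"obner-basis computation (or an intersection-theoretic count using the multiplicities of the base points read off from the previous paragraph) then yields the value $4$.

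The main obstacle is the sheer size of the intermediate symbolic expressions: although each individual operation — intersecting two lines, forming the line through two points, inverting a $3 \times 3$ matrix — is elementary, the resulting rational functions in $(x,y,z)$ are large, and substantial cancellation must occur before the degree-$6$ forms of \eqref{eq:Self-map-hexa} emerge. This is precisely why the calculation is implemented in a computer algebra system; once these symbolic computations have been carried out, the three assertions follow immediately.
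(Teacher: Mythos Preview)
Your overall strategy matches the paper's: compute $\cC_2=\L_{2|3}(\cC_1)$ from the explicit normals in \eqref{eq:C0-C1-of-Hexagon}, bring $\cA_2=\cC_1\cup\cC_2$ back to standard position by the unique projective transformation sending its first four normals to the canonical frame, and then use the pencil argument of Proposition \ref{prop:Degree-rational-self-map} for the degree. The indeterminacy discussion is also fine.

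There is, however, a concrete error in how you recover the parameter point from the renormalized arrangement. You assert that the parameterization of Proposition \ref{prop:Realisation-Hexa} ``identifies the image $\l_{2|3}(x:y:z)$ with the fifth normal vector in the renormalized arrangement''. That is true in the pentagon case of Section \ref{sec:The-pentagon}, where the fifth normal of $\cC_0(w)$ is literally $w$, but it is false here: in \eqref{eq:C0-C1-of-Hexagon} the fifth normal of $\cA(p)$ is $(xz:x^{2}-2xy+y^{2}+xz:yz)$, not $(x:y:z)$. So after you renormalize $\cA_2$, reading off the fifth normal does \emph{not} give $\l_{2|3}(x:y:z)$.

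The paper fills this gap by constructing an explicit rational inverse $\mu:\cR_6\dashrightarrow\PP^2$ to the parameterization $p\mapsto\cA(p)$. Writing $n_5,n_6$ for the fifth and sixth normals and setting $u=n_5(1)/n_5(3)$, $v=n_6(2)/n_6(1)$, one checks from \eqref{eq:C0-C1-of-Hexagon} that $u=x/y$ and $v=(x^2-xy+xz)/(xz)$, and solving gives $\mu(\cA)=\bigl(u\tfrac{v-1}{u-1}:\tfrac{v-1}{u-1}:1\bigr)$. Applying $\mu$ to the renormalized $\gamma\cA_2$ is what actually produces the polynomials in \eqref{eq:Self-map-hexa}. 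Once you insert this inversion step in place of ``read off the fifth normal'', your argument goes through and is essentially the paper's.
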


\begin{proof}
The normal vectors $n_{5},n_{6}$ of the $5^{th}$ and $6^{th}$ line
of $\cC_{0}(p)$ (for generic $p=(x:y:z)\in\PP^{2}$) are given in
\eqref{eq:C0-C1-of-Hexagon}. Denoting $n_{k}(j),\,1\leq j\leq3,$
the $j^{\text{th}}$ coordinate of $n_{k}$, we remark that 
\[
\frac{n_{5}(1)}{n_{5}(3)}=\frac{x}{y},\,\,\frac{n_{6}(2)}{n_{6}(1)}=\frac{x^{2}-xy+xz}{xz}.
\]
Defining $a=\frac{x}{z}$, $b=\frac{y}{z}$, $u=\frac{n_{5}(1)}{n_{5}(3)},$
$v=\frac{n_{6}(2)}{n_{6}(1)}$, we get $u=\frac{a}{b}$, $v=a-b+1$,
which is equivalent to $a=u\frac{v-1}{u-1}$, $b=\frac{v-1}{u-1}$,
thus one can recover $(x:y:z)\in\PP^{2}$ from $\cC_{0}$. In other
words, the rational map $\PP^{2}\to\cR_{6}$ $p\mapsto\cC_{0}(p)$
is birational, with the inverse $\mu$ defined by $\mu(\cC_{0})=(u\frac{v-1}{u-1}:\frac{v-1}{u-1}:1)$,
where $u=\frac{n_{5}(1)}{n_{5}(3)},$ $v=\frac{n_{6}(2)}{n_{6}(1)}$.

By construction, one has $\cC_{2}=\L_{2|3}(\cC_{1})$. Let us define
$\cA_{2}=\cC_{1}\cup\cC_{2}$; it is a realization of $M_{6}$. Let
$\g\in\text{PGL}_{3}$ be the unique projective transformation such
that the first four normal vectors of $\cA_{2}$ are mapped to the
canonical basis. In order to compute the point in $\PP^{2}$ corresponding
to $\l_{2|3}(\cA_{2})$, we just have to apply $\mu$ to the line
arrangement $\g\cA_{2}$ and a computation gives the \eqref{eq:Self-map-hexa}.

For computing the degree of $\l_{2|3}$, we proceed as in the proof
of Proposition \ref{prop:Degree-rational-self-map}. 
\end{proof}
The automorphism group of $M_{6}$ is the dihedral group $D_{6}$
of order $12$, generated by permutations 
\[
s_{1}=(1,2,3,4,5,6)(7,9,11)(8,10,12),\,\,\s_{2}=(2,6)(3,5)(8,12)(9,11).
\]
One computes that the (order $6$) element $s_{1}$ acts on $\cR_{6}\subset\PP^{2}$
through the Cremona involution 
\[
s_{1}':(x:y:z)\to(-x^{2}+xy-xz,-x^{2}+2xy-y^{2}-xz+yz,yz).
\]
The element $s_{2}$ acts on $\cR_{6}\subset\PP^{2}$ through the
involution $s_{2}'(x:y:z)\to(z:x-y+z:x)$. The group generated by
$s_{1}',s_{2}'$ is the order $4$ Klein group. The self-rational
map $\l_{2|3}$ is such that 
\[
\l_{2|3}\circ s_{1}=\l_{2|3},\,\text{and }\l_{2|3}\circ s_{2}=s_{2}\circ\l_{2|3}.
\]

The pentagram map $\text{P}$ acting on arrangements of $6$ lines
$L$ is such that $\text{P}(L)$ is not projectively equivalent to
$L$, but $\text{P}^{\circ2}(L)$ is (see \cite{SchwartzPent}). One
computes that: 
\begin{prop}
The pentagram map preserves the space $\kU_{6}$ of realizations of
$M_{6}$. It acts on $\cR_{6}$ through the involution $s:(x:y:z)\to(x^{2}-xy:x^{2}-2xy+y^{2}:yz).$ 
\end{prop}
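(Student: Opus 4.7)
The plan is to verify the statement by a direct computation, parallel to the method used for Proposition \ref{prop:ational-self-map-hexa}. First I would use the parametrization from Proposition \ref{prop:Realisation-Hexa} to identify a dense open subset of $\cR_6$ with $\PP^2$: to $p = (x:y:z)$ we associate the realization $\cA(p) = \cC_0(p) \cup \cC_1(p)$ whose twelve normal vectors are the ones listed in \eqref{eq:C0-C1-of-Hexagon}. In particular, the hexagon $\cC_0(p) = (\ell_0,\dots,\ell_5)$ depends rationally on $p$.

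Next I would compute the pentagram image $P(\cC_0(p))$ explicitly: for each $r \in \ZZ/6\ZZ$, one forms the points $\ell_r \cap \ell_{r+2}$ and $\ell_{r+1} \cap \ell_{r+3}$ and the line $\ell_r'$ joining them. This yields six normal vectors for $P(\cC_0(p))$ as rational functions of $(x,y,z)$. Then I would apply the unique $\g \in \text{PGL}_3$ that sends the first four of these normal vectors to the canonical basis, and use the rational inverse $\mu$ constructed in the proof of Proposition \ref{prop:ational-self-map-hexa}, namely $\mu(\cC_0') = \bigl(u\tfrac{v-1}{u-1} : \tfrac{v-1}{u-1} : 1\bigr)$ with $u = n_5(1)/n_5(3)$ and $v = n_6(2)/n_6(1)$, to recover the point of $\PP^2$ corresponding to $\g \cdot P(\cC_0(p))$. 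A direct calculation will show this point equals $s(p) = (x^2-xy : x^2-2xy+y^2 : yz)$. Because $\mu$ is only defined on hexagons lying in the image of $p \mapsto \cC_0(p)$, the fact that $\mu(\g \cdot P(\cC_0(p)))$ is a well-defined regular map on a non-empty open set simultaneously shows that $P(\cC_0(p))$ belongs to that image, hence $P$ preserves $\kU_6$.

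Finally, to confirm that $s$ defines an involution on $\cR_6$, I would substitute $s(x:y:z)$ back into the formula for $s$ and simplify: denoting $X = x^2-xy$, $Y = x^2-2xy+y^2$, $Z = yz$, one checks that $X^2 - XY$, $X^2 - 2XY + Y^2$, $YZ$ all share a common factor which, after cancellation, returns $(x:y:z)$. This matches Schwartz's general result \cite{SchwartzPent} that $P^{\circ 2}$ acts trivially on the projective equivalence class of a hexagon, so only the involution $s$ can appear.

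The main obstacle is purely the bulk of the symbolic computation — computing $P(\cC_0(p))$, applying the normalization $\g$, and evaluating $\mu$ all involve sizeable rational expressions in $x,y,z$. However, the verification is entirely mechanical and can be carried out in \texttt{MAGMA} or \texttt{OSCAR}, exactly as in Proposition~\ref{prop:ational-self-map-hexa}. No conceptual difficulty arises beyond being careful about the open subset of $\PP^2$ on which $p \mapsto \cA(p)$, $P$, and $\mu$ are simultaneously regular.
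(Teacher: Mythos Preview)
Your approach is exactly the one the paper has in mind: the paper records the result with nothing more than ``one computes that'', and your plan---apply $P$ to $\cC_0(p)$, normalize by $\gamma$, and read off the parameter via $\mu$---is the natural way to carry out that computation.

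One small logical gap: the inverse $\mu$ is just a rational formula in $n_5$ and $n_6$, so it can be evaluated on \emph{any} normalized hexagon, not only on those of the form $\cC_0(p')$. Hence the mere fact that $\mu\bigl(\gamma\cdot P(\cC_0(p))\bigr)$ returns a point $s(p)$ does not by itself prove that $\gamma\cdot P(\cC_0(p))$ lies in the image of $p'\mapsto\cC_0(p')$, i.e.\ that $P$ preserves $\kU_6$. To close this, after computing $s(p)$ you should check directly that $\gamma\cdot P(\cC_0(p)) = \cC_0(s(p))$ as labelled hexagons (all six normals, not just the fifth and sixth), or equivalently that $P(\cC_0(p))\cup\L_{2|3}(P(\cC_0(p)))$ has the incidences of $M_6$. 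This is one extra line in the computer-algebra verification and resolves both assertions at once.
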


The involution $s$ is not an element of the Klein group generated
by $s_{1}',s_{2}'$; one has $s\circ s_{1}=s_{1}\circ s$ and $(s_{2}\circ s)^{2}=s_{1}$,
$(s_{2}\circ s)^{3}=s\circ s_{2}$. The involution $s$ does not preserves
the elliptic fibration of the modular surface $\Xi_{1}(6)$ since
the $j$-invariant of the elliptic curve $E$ passing through $\cD(\cC_{0}\cup\cC_{1})$
is different from the $j$-invariant of the elliptic curve $E'$ passing
through $\cD(\text{P}\cC_{0}\cup\L_{2|3}(\text{P}\cC_{0}))$. For
arrangements $\cC$ of $n\geq7$ lines, there is no $k\geq1$ such
that $\text{P}^{\circ k}(\cC)$ is projectively equivalent to $\cC$,
and we did not find other connections between the pentagram map and
the operators $\L$.

\end{document}